\date{}
\numberwithin{equation}{section}
\newtheorem{thm}{Theorem}[section]
\tikzset{node distance=5cm, auto}
\newtheorem{lemma}[thm]{Lemma}
\newtheorem{conjecture}[thm]{Conjecture}
\newtheorem{remark}[thm]{Remark}
\newtheorem{question}[thm]{Question}
\newcommand{\etale}{\'etal\@ifstar{\'e}{e\xspace}}
\DeclareSymbolFont{cyrletters}{OT2}{wncyr}{m}{n}
\DeclareMathSymbol{\Sha}{\mathalpha}{cyrletters}{"58}
\begin{document}

\title{Totally ramified subfields of $p$-Algebras}
\author{Adam Chapman}
\address{School of Computer Science, Academic College of Tel-Aviv-Yaffo, Rabenu Yeruham St., P.O.B 8401 Yaffo, 6818211, Israel}

\author{S. Srimathy}
\address{School of Mathematics, Tata Institute of Fundamental Research, Mumbai, 400005, India }

\begin{abstract}
 We conjecture that a $p$-algebra over a complete discrete valued field $K$  contains a totally ramified purely inseparable subfield if and only if it contains a totally ramified cyclic maximal subfield.  We prove the conjecture in several cases.
\end{abstract}
\maketitle

\section{Introduction}
A central simple algebra is called a \emph{$p$-algebra} if its center is a field of characteristic $p>0$ and its index is a power of $p$. It is said to be \emph{cyclic} if it contains  a cyclic maximal subfield.  Let $W_m(F)$ denote the truncated Witt vectors of length $m$ over $F$. It is well known (\cite[Chapter VII, \S7, 8]{albert_book}, \cite{MammoneMerkurjev:1991}) that any cyclic $p$-algebra  over a  field $F$ is of the form
\begin{align*}
F \langle x_1,\dots,x_m,y : (x_1^p,\dots,x_m^p)=(x_1,\dots,x_m)+\omega,\\ y^{p^m}=b, (yx_1y^{-1},\dots,y x_m y^{-1})=(x_1,\dots,x_m)+(1,0,\dots,0)\rangle
\end{align*}
where $\omega \in W_m(F)$, $b\in F^*$, and the symbol $+$  above denotes the addition rule of Witt vectors. We denote this algebra by $[\omega,b)_{K}$. \\
\indent It is a famous result of Albert that tensor products of cyclic $p$-algebras are cyclic and moreover, the cyclic $p$-algebras generate the $p$-primary component  of the Brauer group.  In other words, any $p$-algebra is Brauer equivalent to a cyclic $p$-algebra (\cite[Chapter VII, \S9, Theorem 31]{albert_book}).  \\
\indent Purely inseparable maximal subfields play a major role understanding cyclic $p$-algebras. The close connection between cyclic maximal subfield and purely inseparable maximal subfields  is  stated by the main theorem of cyclicity of $p$-algebras: a $p$-algebra is cyclic if and only if it contains a simple purely inseparable maximal subfield (\cite[Chapter IV, Theorem 4.4.10]{jacobson_book}). \\
\indent Now suppose that  $K$ is a  complete discrete valued field  of characteristic $p$ with residue $k$. Given a  $p$-algebra $A$ over $K$,  we analyze the ramification of cyclic maximal subfields and purely inseparable maximal subfields of $A$. We conjecture the following:
\begin{conjecture}\label{conj1}
$A$ contains a totally ramified  cyclic maximal subfield if and only if it contains a totally ramified purely inseparable maximal subfield.
\end{conjecture}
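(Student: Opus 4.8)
The plan is to study the unique extension of the valuation of $K$ to the underlying division algebra, and to read off the two kinds of subfields from the resulting ramification data. First I would concentrate on the case $A=D$ a division algebra, which carries the essential difficulty: the discrete valuation $v$ extends uniquely to $D$, and writing $e(D/K)=[\Gamma_D:\Gamma_K]$, $\overline D$ for the residue algebra, and $n=\deg D$, one has $n^2=e(D/K)\,[\overline D:k]$ in the complete discrete case. The key preliminary observation is that a totally ramified maximal subfield $M$ of $D$ (of either type) has $\Gamma_M=\tfrac1n\mathbb Z$, while always $\Gamma_M\subseteq\Gamma_D\subseteq\tfrac1n\mathbb Z$; hence such an $M$ exists only when $e(D/K)=n$, and then conversely any $\pi\in D$ with $v(\pi)=1/n$ generates one. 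Thus both halves of the conjecture presuppose $e(D/K)=n$, and the real content is to upgrade an arbitrary totally ramified maximal subfield to a cyclic, respectively purely inseparable, one.

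For the implication from a cyclic subfield to an inseparable one, suppose $L\subset A$ is a totally ramified cyclic maximal subfield and present $A$ as the cyclic crossed product $(L/K,\sigma,b)$, whose Brauer class depends on $b$ only modulo $N_{L/K}(L^{\ast})$. Because $L/K$ is totally ramified, the minimal polynomial of a uniformizer of $L$ is Eisenstein, so $N_{L/K}$ of that uniformizer is a uniformizer of $K$; hence $v(N_{L/K}(L^{\ast}))=\mathbb Z$ and I may replace $b$ by an element with $v(b)$ coprime to $p$ without changing $A$. Then $b\notin K^{\ast p}$, the polynomial $T^{p^m}-b$ is irreducible, and the element $y$ with $y^{p^m}=b$ in the new presentation generates a purely inseparable maximal subfield $K(y)$ with $v(y)=v(b)/p^m$, which is totally ramified. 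I expect this direction to go through for every $m$.

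The hard direction is the reverse implication. Here I would start from the given totally ramified purely inseparable maximal subfield $K(y)$ with $y^{p^m}=b$ and $v(b)$ coprime to $p$, use the main theorem of cyclicity to write $A=[\omega,b)$ with this very $b$, and then try to produce a Witt vector $\omega'$ with $[\omega',b)=A$ whose Artin--Schreier--Witt extension $K(\wp^{-1}(\omega'))$ is totally ramified. In the degree-$p$ case I would exploit the explicit identity $(xy^{-1})^p=(x^p-x)\,y^{-p}$ inside $[a,b)$, together with the additive and norm relations for the symbol, to hunt for an Artin--Schreier generator of negative valuation with pole order prime to $p$. In the inertially split case, where $\overline D$ is a separable extension of $k$, this reduces to asking whether a totally ramified cyclic degree-$p$ extension $L_1/K$ splits $A$, i.e.\ whether $b$ becomes a norm from the unramified extension $L_1\overline D/L_1$; tracking residues, this becomes a condition in $k^{\ast}/N_{\overline D/k}(\overline D^{\ast})$ that can be killed by a suitable choice of $L_1$ in several situations.

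The main obstacle is precisely this last step. For $m>1$ the ramification of $K(\wp^{-1}(\omega'))$ is governed by the wild ramification filtration of $W_m(K)/\wp(W_m(K))$ (Kato's refined Swan conductor), which does not transform under the symbol relations as transparently as the norm computation in the easy direction; and over an imperfect residue field $k$ the differential-form description $[\omega,b)\leftrightarrow\omega\,\tfrac{db}{b}$ acquires extra terms that obstruct lowering the pole order to one coprime to $p$. I therefore anticipate the conjecture being provable unconditionally only when this residue-level norm condition is automatic --- for instance $m=1$, a perfect (or otherwise suitably tame) residue field, or special shapes of $\omega$ and $b$ --- which is exactly the sense in which one can hope to establish it in several cases.
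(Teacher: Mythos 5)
Your ``only if'' direction is sound and is essentially the paper's argument in different clothing: the paper multiplies the generator $y$ by $u\in K_{\omega}$ of valuation in $\frac{1}{p^m}\mathbb{Z}\setminus\frac{1}{p^{m-1}}\mathbb{Z}$, so that $z=uy$ satisfies $z^{p^m}=N_{K_{\omega}/K}(u)\,b$ --- precisely your twist of $b$ by norms from the totally ramified subfield. One caveat on your setup, though: the reduction ``first concentrate on $A=D$ a division algebra'' is not innocuous. A maximal subfield of $M_r(D)$ is not a subfield of $D$, so your containment $\Gamma_M\subseteq\Gamma_D$ and the conclusion that both halves presuppose $e(D/K)=n$ only make sense in the division case, and you never supply the reduction. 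The paper sidesteps this entirely: Lemma \ref{lem:simple} plus the cyclicity theorem give a presentation $A\cong[\omega,b)_K$ with $(\mathfrak{v}(b),p)=1$ for the $p$-algebra $A$ itself, division or not, and all arguments run on that symbol.

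The genuine gap is that for the ``if'' direction you offer a research program, not a proof, in any case --- including the ones the paper actually settles. (i) In degree $p$, your identity $(xy^{-1})^p=(x^p-x)y^{-p}$ is true, but it yields $z$ with $z^p=\omega b^{-1}\in K$, a purely inseparable element rather than an Artin--Schreier generator; the paper instead normalizes $(\mathfrak{v}(b),p)=1$ and $\mathfrak{v}(b)<\mathfrak{v}(\omega)$ and takes $z=x+y$, which satisfies $z^p-z=\omega+b$ with $\mathfrak{v}(z)=\mathfrak{v}(y)\in\frac{1}{p}\mathbb{Z}$, done. (ii) In degree $p^2$ the paper does not need refined Swan conductors: after arranging $\omega_1,\omega_2\in K^p$ and $\mathfrak{v}(b)<\min\{0,\mathfrak{v}(\omega_1),\mathfrak{v}(\omega_2)\}$, Lemmas \ref{lem:split} and \ref{lem:add} show the Witt addition $(\omega_1,\omega_2)+(b,0)$ costs only split symbols, and Lemma \ref{lem:eta} reads off total ramification of $K_{(\omega_1+b,\omega_2)}$ from valuations of the defining equations; you anticipate the difficulty but never produce the required $\omega'$. (iii) Most importantly, you miss the paper's pivotal idea for general degree when $\dim_{\mathbb{F}_p}(k/\mathcal{P}(k))\geq 2$: Lemma \ref{Clem} (two $p$-algebras sharing a simple purely inseparable maximal subfield share a cyclic maximal subfield), applied to $A$ and two auxiliary semiramified division algebras $D_i=[\omega_i,b)_K$ built from linearly disjoint unramified lifts (Lemmas \ref{lem:disjoint}, \ref{lem:division}, Remark \ref{rem:semiramified}); the shared cyclic maximal subfield $L$ then has $\overline{L}\subseteq k_1\cap k_2=k$, forcing $L/K$ totally ramified with no norm computation at all. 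Your residue-level condition in $k^{*}/N_{\overline{D}/k}(\overline{D}^{*})$ is a plausible alternative route, but you verify it in no case --- not even for perfect $k$, where the paper argues directly: adding $(b,0,\dots,0)$ to $\omega$ makes the unique degree-$p$ subextension ramified, while perfectness of $k$ would force any residue extension of $K_{\omega'}$ to be separable cyclic and hence to contain an unramified degree-$p$ inertial lift, a contradiction.
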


The "only if" direction is proven in \S \ref{Seasy}. The "if" direction is considerably harder. We prove it in each of the following  cases in \S\ref{sec:perfect}, \S\ref{sec:psquare} and \S\ref{sec:residuedim}:
\begin{enumerate}
\item $k$ is perfect
    \item  the degree of $A$ is $p$ or $p^2$,
    \item  $\dim_{\mathbb{F}_p}(k/\mathcal{P}(k)) \geq 2$
    
\end{enumerate}
where $\mathcal{P}(k):= \{a^p-a | a \in k\}$.\\
\indent One may wonder why we consider only totally ramified maximal subfields instead of maximal subfields of given ramification index in the Conjecture \ref{conj1}. We give an explanation below.\\
\indent We first observe  that a totally ramified purely inseparable maximal subfield  of $A$ is necessarily simple  over $K$ (Lemma \ref{lem:simple}) and therefore $A$ is cyclic (\cite[Chapter IV, Theorem 4.4.10]{jacobson_book}). But a purely inseparable extension of $K$ that is not totally ramified need  not necessarily be simple and therefore $A$ need not contain a cyclic maximal subfield.  Conversely,  suppose $k$ is perfect, then note that by the fundamental equality (\cite[Chapter II, \S2, Corollary 1]{serre_local}),  every purely inseparable extension of $K$ is totally ramified.   So in this case, $A$ may contain  cyclic maximal subfields of  ramification index less than the degree of $A$, but $A$ does not contain purely inseparable maximal subfield of the same ramification index.  Therefore the analogous  statement of Conjecture \ref{conj1} is not true for arbitrary complete discrete valued fields if we consider maximal subfields of   ramification index less than the degree of $A$.  See however, Question \ref{quest}. 

\section{Notations}
For a field $F$, the set of non-zero elements of $F$ is denoted by $F^*$.  Given a field extension $L/K$, the symbol $N_{L/K}$ denotes the norm function  of $L$ over $K$. The symbol $W_n(F)$ denotes the truncated Witt vector of length $n$ over $F$. 
 \\
 \indent Throughout this paper $K$ denotes a complete discrete valued field with valuation $\mathfrak{v}$. The residue field of $K$ is denoted $k$.  We denote the value group of $K$ by $\Gamma_K$. For a  finite  extension $L/K$, the (unique) extension of the valuation $\mathfrak{v}$ to $L$ is also denoted by $\mathfrak{v}$.  We normalize the value group of any finite extension of $K$ so that  $\Gamma_K = \mathbb{Z}$.  The greatest common divisor of positive integers $m$ and $n$ is denoted by $(m,n)$.   
\section{Preliminaries}

\subsection{Cyclic extension of degree $p^n$ over fields of characteristic $p$ }\label{sec:cyclic}
Let $F$ be a field with  $\operatorname{char}(F)=p>0$ and let $W_m(F)$ denote the truncated Witt vectors of length $m$. It is well known (see \cite[Chapter III]{Jacobson:1964}, \cite{lara_thesis}) that every cyclic field extension $L$ of $F$ of degree $p^m$  is isomorphic to  $F(x_1, x_2, \cdots x_m)$  where
\begin{align}\label{eqn:cyclicwitt}
(x_1^p,\dots,x_m^p)=(x_1,\dots,x_m)+\omega
\end{align}
for some $\omega \in W_m(F)$ where the $`+'$ symbol is the addition rule in the ring $W_m(F)$. Conversely, every field extension of $F$ of the form  $F(x_1, x_2, \cdots, x_m)$ satisfying (\ref{eqn:cyclicwitt}) is a cyclic extension of degree $p^m$ over $F$. For $\omega \in W_m(F)$, we denote by  $F_{\omega}$, the corresponding cyclic extension of degree $p^m$ over $F$.\\
\indent For example, every cyclic extensions of degree $p^2$ over $F$ corresponds to some Witt vector $ \omega= (\omega_1, \omega_2) \in W_2(F)$ and the corresponding extension $F_{\omega} \cong F(x_1, x_2)$ satisfies
\begin{align}
    x_1^p-x_1&=\omega_1 \label{eqn1:p2}\\ 
    x_2^p-x_2&=\omega_2-\sum_{i=1}^{p-1} \frac{(p-1)!}{i!(p-i)!} \omega_1^k x_1^{p-i} \label{eqn2:p2}
\end{align}
Analogous equations for cyclic extensions of larger degrees are very cumbersome to write down explicitly. \\
\indent We recall that Artin-Schreier extensions of degree $p$ over $F$ are in one-to-one correspondence with the $\mathbb{F}_p^*$ orbits of $F/\mathcal{P}(F)$ where 
\begin{align*}
    \mathcal{P}(F) = \{a^p -a | a \in F\}
\end{align*}
 This is well known. See for example \cite[Remark 2.3]{as_nip}. Moreover, given any  degree $p^m$ cyclic extension  $L$ of  $F$, there always exists a cyclic extension of degree $p^{m+1}$ over $F$ containing $L$ (\cite[Chapter IV, \S4.2, Theorem 4.2.3]{jacobson_book} or \cite[Chapter III, \S5, Theorem 16]{Jacobson:1964}). Since every cyclic extension $L/F$ of degree $p^m$ contains a unique cyclic subextension of degree $p$ over $F$, we get:
\begin{lemma}\label{lem:disjoint}
    Let $F$ be a field of characteristic $p$. Suppose that  $\dim_{\mathbb{F}_p}(F/\mathcal{P}(F)) \geq 2$. Then for any $m\geq 0$, there exists cyclic extensions $L_1/F$ and $L_2/F$ of degree $p^m$ satisfying $L_1 \cap L_2 = F$
\end{lemma}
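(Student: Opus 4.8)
The plan is to reduce the statement to producing two cyclic extensions of degree $p^m$ whose unique degree-$p$ subextensions are \emph{distinct}, and then to exploit the fact that the subfields of a cyclic $p$-extension form a single chain in order to force the intersection down to $F$. The case $m=0$ is vacuous, since both extensions are then $F$ itself, so I would assume $m\geq 1$ throughout.

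First I would invoke the hypothesis $\dim_{\mathbb{F}_p}(F/\mathcal{P}(F))\geq 2$ together with the recalled correspondence between Artin-Schreier extensions and $\mathbb{F}_p^*$-orbits of $F/\mathcal{P}(F)$. Linear independence lets me choose classes $[a],[b]\in F/\mathcal{P}(F)$ that are independent over $\mathbb{F}_p$; then $[a]$ and $[b]$ are nonzero and $[b]$ is not an $\mathbb{F}_p^*$-multiple of $[a]$, so they lie in two \emph{different} nontrivial orbits. Letting $\alpha,\beta$ satisfy $\alpha^p-\alpha=a$ and $\beta^p-\beta=b$, the extensions $E_1=F(\alpha)$ and $E_2=F(\beta)$ are therefore distinct degree-$p$ extensions of $F$.

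Next I would apply the recalled result that every cyclic extension of degree $p^j$ is contained in a cyclic extension of degree $p^{j+1}$. Applying it $m-1$ times to each of $E_1$ and $E_2$ yields cyclic extensions $L_1\supseteq E_1$ and $L_2\supseteq E_2$, each of degree $p^m$ over $F$. Since $L_i$ is cyclic of degree $p^m$, its Galois group $\mathbb{Z}/p^m\mathbb{Z}$ has a totally ordered lattice of subgroups, so the subfields of $L_i$ form a chain with exactly one subfield of each degree $p^j$ for $0\leq j\leq m$. In particular $E_i$, being a subfield of degree $p$, is \emph{the} unique degree-$p$ subextension of $L_i$.

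Finally I would show $L_1\cap L_2=F$. If not, then $L_1\cap L_2$ is a nontrivial subfield of the cyclic extension $L_1$; since the subfields of $L_1$ form a chain, the smallest nontrivial one is $E_1$, so $E_1\subseteq L_1\cap L_2\subseteq L_2$. This makes $E_1$ a degree-$p$ subextension of $L_2$, and by the uniqueness just noted it forces $E_1=E_2$, contradicting their distinctness. The argument is essentially a direct assembly of the three preliminary facts; the only point requiring care is the selection of two genuinely distinct Artin-Schreier extensions from the dimension hypothesis and the observation that they survive as the unique bottom layers after lifting, so I do not expect a real obstacle beyond this bookkeeping.
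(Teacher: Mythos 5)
Your proof is correct and follows essentially the same route as the paper, whose (implicit) argument is exactly the assembly of the three preliminaries: two distinct Artin--Schreier extensions from $\dim_{\mathbb{F}_p}(F/\mathcal{P}(F))\geq 2$, the embedding theorem lifting each to a cyclic extension of degree $p^m$, and the uniqueness of the degree-$p$ subextension of a cyclic $p$-extension forcing $L_1\cap L_2=F$. Your explicit chain-of-subfields argument for the final step is a careful spelling-out of what the paper leaves to the reader, not a different approach.
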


\subsection{Symbol manipulation in cyclic p-algebras} \label{sec:sym}
The following symbol manipulation techniques can be found in \cite[Proposition 1]{MammoneMerkurjev:1991}.
Let $\omega, \omega' \in W_m(F)$, $a_1, a_2, \dots a_m\in F$, $b,b' \in F^*$. Then in $Br(F)$ we have
\begin{align}
     [\omega, b)_F + [\omega', b)_F &= [\omega + \omega', b)_F\label{item:sym1}\\
      [\omega, b)_F + [\omega, b')_F &= [\omega, bb')_F\label{item:sym2}\\
      [(0, a_1, a_2, \cdots a_m), b)_F &= [(a_1, a_2, \cdots a_m), b)_F \label{item:sym3}\\
      [(b,0 ,0 \cdots, 0), b)_F &= 0 \label{item:sym4}
\end{align}
Moreover by \cite[Chapter 4, Corollary 4.7.5]{GilleSzamuely:2006}, the algebra $[\omega, b)_F$ is split  if and only if 
\begin{align}
    b \in N_{F_{\omega}/F}(F_{\omega}) \label{item:sym5}
\end{align}

In particular, $[\omega, b)_F$ is split if $b \in (F^*)^{p^m}$. 
We also  note that  $F_{\omega}  \cong F_{\omega^{p^r}}$  for any $r$.   Therefore  $[\omega, b)_F \cong [\omega^{p^r}, b]_F$. Similarly,  $[\omega, b)_F \cong [\omega, \gamma^{p^m}b)_F$ for any $\gamma \in F^*$.

\subsection{Ramification of Division algebras over complete discrete valued fields}\label{sec:ramification}
As before, $K$ denotes a complete discrete valued field with valuation $\mathfrak{v}$ and residue $k$ . Let $L$ be a finite extension of $K$ of degree $n$. Then it is well  known that the valuation on $K$ extends uniquely to $L$ (\cite[Chapter II, \S2, Corollary 2, 4]{serre_local}). By abuse of notation, we will also denote the extended valuation on $L$ by $\mathfrak{v}$.  Normalizing  the valuation on $L$ so that $\Gamma_K = \mathbb{Z}$, we get for any $a \in L$,
\begin{align}\label{eqn:extval}
    \mathfrak{v}(a) = \frac{1}{n} \mathfrak{v}(N_{L/K}(a))
    \end{align}

 We will now recall valuation theory on  division algebras. The following are well known can  be found in  \cite{jw_henselian}, \cite{TignolWadsworth:2015} and \cite{wadsworth}. \\ 
 \indent Let $D$ be a division algebra over  $K$. The valuation on $K$ extends uniquely to a valuation on $D$ (\cite[Corolary 2.2]{wadsworth}) also denoted by $\mathfrak{v}$.  Let $\overline{D}$ denote the residue algebra of $D$ and $\Gamma_D$ denote the value group of $D$. Then we have the following fundamental equality:
\begin{align}\label{eqn:fund}
    [D:K] = [\Gamma_D: \Gamma_K] [\overline{D}: k]
    \end{align}
We now observe the following:
\begin{remark}\label{rem:semiramified}
    Suppose $D$ contains an unramified field maximal subfield $L$ as well as a totally ramified maximal subfield (i.e, $D$ is \emph{semiramified} as in \cite[p. 128]{jw_henselian}), then by the fundamental equality (\ref{eqn:fund}), $\overline{D}= \overline{L}$.
\end{remark}
The following lemma gives a sufficient condition for a $p$-algebra over $K$ to be a division algebra.
\begin{lemma}\label{lem:division}
    Let $b \in K^*$ with $(\mathfrak{v}(b),p) = 1$. Let $\omega \in W_m(K)$ be such that $K_{\omega}/K$ is unramified.  Then  $[\omega, b)_K$ is a division algebra over $K$.
\end{lemma}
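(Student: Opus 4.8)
The plan is to construct an explicit real-valued valuation $w$ on $A=[\omega,b)_K$ extending $\mathfrak{v}$; once such a $w$ exists, $A$ has no zero divisors and is therefore a division algebra. Recall that $A$ has left $K_\omega$-basis $1,y,\dots,y^{p^m-1}$, where $y^{p^m}=b$ and $yay^{-1}=\sigma(a)$ for $a\in K_\omega$, with $\sigma$ a generator of $\operatorname{Gal}(K_\omega/K)$. Writing $r=\mathfrak{v}(b)$, so that $(r,p)=1$ by hypothesis, I would define, for a nonzero $x=\sum_{i=0}^{p^m-1}a_iy^i$ with $a_i\in K_\omega$,
\[
 w(x)=\min_{a_i\neq 0}\Big(\mathfrak{v}(a_i)+\tfrac{ir}{p^m}\Big),
\]
and set $w(0)=\infty$. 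Since $K_\omega/K$ is unramified we have $\Gamma_{K_\omega}=\mathbb{Z}$, so each $\mathfrak{v}(a_i)\in\mathbb{Z}$ and $w$ takes values in $\tfrac{1}{p^m}\mathbb{Z}$.

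The key observation, and the one place the coprimality $(r,p)=1$ enters, is that the $p^m$ quantities $\tfrac{ir}{p^m}$ for $0\le i\le p^m-1$ represent pairwise distinct cosets of $\mathbb{Z}$ in $\tfrac{1}{p^m}\mathbb{Z}$, because $i\mapsto ir \bmod p^m$ is a bijection of $\{0,\dots,p^m-1\}$. Consequently the terms $\mathfrak{v}(a_i)+\tfrac{ir}{p^m}$ lie in distinct cosets of $\mathbb{Z}$, so the minimum defining $w(x)$ is attained at a \emph{unique} index. I would then verify that $w$ is a valuation. The ultrametric inequality $w(x+x')\ge\min(w(x),w(x'))$ is immediate. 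For multiplicativity, expanding a product $xx'=\sum_{i,j}a_i\,\sigma^i(c_j)\,y^{i+j}$ and using both $\mathfrak{v}(\sigma^i(a))=\mathfrak{v}(a)$ (automatic, since $\sigma$ permutes the unique extension of $\mathfrak{v}$ to $K_\omega$) and $y^{p^m}=b$ with $\mathfrak{v}(b)=r$, one finds that every term acquires $w$-value $\mathfrak{v}(a_i)+\mathfrak{v}(c_j)+\tfrac{(i+j)r}{p^m}$; the unique-minimum property then shows that the single term coming from the leading indices of $x$ and of $x'$ cannot be cancelled, giving $w(xx')=w(x)+w(x')$.

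Finally, any ring carrying such a valuation is a domain, and a finite-dimensional domain over a field is a division ring; hence $A$ is a division algebra. I expect the only genuinely delicate step to be the multiplicativity of $w$: one must rule out cancellation among the cross terms $a_i\sigma^i(c_j)y^{i+j}$ that fall in the same power of $y$ modulo $p^m$, and it is precisely the distinct-coset argument furnished by $(\mathfrak{v}(b),p)=1$ that guarantees the minimal term survives. As a sanity check, the resulting value group $\tfrac{1}{p^m}\mathbb{Z}$ has index $p^m$ over $\Gamma_K$ and the residue algebra is $\overline{K_\omega}$ of degree $p^m$ over $k$, so equality holds in the fundamental equality (\ref{eqn:fund}), consistent with $A$ being a division algebra of degree $p^m$.
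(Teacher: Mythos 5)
Your proof is correct, but it takes a genuinely different route from the paper. The paper's proof is a three-line application of Wedderburn's norm criterion for cyclic algebras: it supposes $b^r = N_{K_\omega/K}(\alpha)$ for the least $r>0$, compares valuations via (\ref{eqn:extval}) to get $r\mathfrak{v}(b) = p^m\mathfrak{v}(\alpha)$, uses unramifiedness to place $\mathfrak{v}(\alpha)\in\mathbb{Z}$, and concludes from $(\mathfrak{v}(b),p)=1$ that $p^m\mid r$, which by the cited criterion of Wedderburn forces $[\omega,b)_K$ to be division. You instead construct an explicit valuation $w$ on the algebra itself -- the standard construction for nicely semiramified algebras in the Jacob--Wadsworth tradition -- and your verification is sound: the coprimality $(\mathfrak{v}(b),p)=1$ makes the cosets $\mathbb{Z}+\tfrac{ir}{p^m}$ pairwise distinct, so the minimum in $w$ is attained at a unique index, which (together with $\mathfrak{v}\circ\sigma^i=\mathfrak{v}$ and the bookkeeping $y^{i+j}=b\,y^{i+j-p^m}$, under which every cross term still carries value $\mathfrak{v}(a_i)+\mathfrak{v}(c_j)+\tfrac{(i+j)r}{p^m}$) rules out cancellation at the minimal level and gives multiplicativity; a finite-dimensional domain over $K$ is then a division algebra. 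The trade-off: the paper's argument is shorter and outsources all the work to a classical criterion, while yours is self-contained and yields strictly more information -- it exhibits $[\omega,b)_K$ as a valued division algebra with $\Gamma_D=\tfrac{1}{p^m}\mathbb{Z}$ and residue algebra $\overline{K_\omega}$, facts the paper must recover separately via the fundamental equality (\ref{eqn:fund}) and Remark \ref{rem:semiramified} when it applies this lemma in the proof of Theorem \ref{main}. One small presentational gap worth closing if you write this up: when you argue that the leading term survives, note explicitly that competing pairs $(i,j)\neq(i_0,j_0)$ contributing to the same power of $y$ may satisfy $i+j=i_0+j_0\pm p^m$ rather than $i+j=i_0+j_0$, but the displayed value formula depends only on $i+j$, and uniqueness of the minimizing indices still gives the strict inequality.
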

\begin{proof}
     Suppose  there exists  $r> 0$ such that $r$ is the least number satisfying 
    \begin{align*}
        b^r = N_{K_{\omega}/K}(\alpha)
    \end{align*}
    for some $\alpha \in K_{\omega}$.  Now,  comparing the valuations on both sides of the equation and using (\ref{eqn:extval}), we get
  \begin{align*}
      r\mathfrak{v}(b) = p^m\mathfrak{v}(\alpha)
  \end{align*}
Since $K_{\omega}/K$ is unramified, $\mathfrak{v}(\alpha) \in \Gamma_K = \mathbb{Z}$. By hypothesis, $(\mathfrak{v}(b),p) = 1$ and hence $p^m$ divides $r$. The claim now follows from \cite[p. 166, last line]{wedderburn_primitive}. 
\end{proof}

\section{From cyclic to purely inseparable}\label{Seasy}

\begin{thm}
Let $A$  be a cyclic algebra of degree $p^m$ over $K$. Suppose $A$ contains a  totally ramified cyclic maximal subfield, then $A$ contains a totally ramified  purely inseparable maximal subfield.
\end{thm}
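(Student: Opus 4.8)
The plan is to work with an explicit presentation of $A$ and to produce the desired subfield by a single twist of the standard generator. Since $A$ is cyclic and, by hypothesis, contains a totally ramified cyclic maximal subfield $M$, I would first use $M$ itself to present $A$: writing $M = K_\omega$ for a suitable $\omega \in W_m(K)$ (possible by \S\ref{sec:cyclic}), the crossed product description of a cyclic $p$-algebra gives $A \cong [\omega, b)_K$ for some $b \in K^*$, with the standard generators $x_1,\dots,x_m$ of $K_\omega$ and an element $y$ satisfying $y^{p^m} = b$ and $y\beta y^{-1} = \sigma(\beta)$ for all $\beta \in K_\omega$, where $\sigma$ generates $\mathrm{Gal}(K_\omega/K)$. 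The point of choosing the presentation attached to the totally ramified subfield is that $\Gamma_{K_\omega} = \tfrac{1}{p^m}\mathbb{Z}$.

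The key step is to correct the valuation of $b$ by an element of $K_\omega$ without leaving $A$. For $\alpha \in K_\omega^*$ set $z = y\alpha$. Using $y\beta = \sigma(\beta)y$ repeatedly, an induction gives $(y\alpha)^n = y^n\prod_{i=0}^{n-1}\sigma^{-i}(\alpha)$, so that
\[
z^{p^m} = y^{p^m}\prod_{i=0}^{p^m-1}\sigma^{-i}(\alpha) = b\,N_{K_\omega/K}(\alpha) \in K^*,
\]
since $\{\sigma^{-i}\}_{0\le i<p^m} = \mathrm{Gal}(K_\omega/K)$. By (\ref{eqn:extval}), $\mathfrak{v}(N_{K_\omega/K}(\alpha)) = p^m\,\mathfrak{v}(\alpha)$, and because $K_\omega/K$ is totally ramified I may choose $\alpha$ with $\mathfrak{v}(\alpha) \in \tfrac{1}{p^m}\mathbb{Z}$ realizing any prescribed integer norm-valuation; in particular I can arrange $\mathfrak{v}(z^{p^m}) = \mathfrak{v}(b) + \mathfrak{v}(N_{K_\omega/K}(\alpha))$ to be coprime to $p$, taking $\alpha = 1$ if $\mathfrak{v}(b)$ is already coprime to $p$, and a uniformizer of $K_\omega$ (so that $\mathfrak{v}(N_{K_\omega/K}(\alpha))=1$) otherwise.

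Finally I would check that $K(z)$ is the subfield we want. Put $b' = z^{p^m} \in K^*$ with $(\mathfrak{v}(b'),p)=1$; then $b' \notin K^p$, so $X^{p^m}-b'$ is irreducible and $K(z) = K[z]$ is a purely inseparable maximal subfield of degree $p^m$. Since $\mathfrak{v}(z) = \mathfrak{v}(b')/p^m$ with $(\mathfrak{v}(b'),p^m)=1$, the group generated by $\mathfrak{v}(z)$ and $\Gamma_K = \mathbb{Z}$ is all of $\tfrac{1}{p^m}\mathbb{Z}$, so $[\Gamma_{K(z)}:\Gamma_K] \ge p^m$; by the fundamental equality (\ref{eqn:fund}) applied to $K(z)/K$ this forces $[\Gamma_{K(z)}:\Gamma_K] = p^m$ and trivial residue extension, i.e.\ $K(z)$ is totally ramified. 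This $K(z)$ is then the required totally ramified purely inseparable maximal subfield of $A$. The only genuinely computational point is the identity $z^{p^m}=b\,N_{K_\omega/K}(\alpha)$, and the sole place total ramification is used is in guaranteeing a norm from $K_\omega$ of valuation coprime to $p$; I expect no serious obstacle beyond this bookkeeping, consistent with this being the \emph{easy} direction of Conjecture \ref{conj1}.
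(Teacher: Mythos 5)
Your proof is correct and takes essentially the same approach as the paper: the paper also presents $A \cong [\omega, b)_K$ via the totally ramified cyclic maximal subfield $K_\omega$ and twists the generator, setting $z = uy$ with $u \in K_\omega$ of valuation in $\tfrac{1}{p^m}\mathbb{Z} \setminus \tfrac{1}{p^{m-1}}\mathbb{Z}$ when $\mathfrak{v}(b) \in p\mathbb{Z}$, so that $z^{p^m} = N_{K_\omega/K}(u)\,b$ has valuation coprime to $p$ and $K(z)$ is a totally ramified purely inseparable maximal subfield. The only differences are cosmetic: you twist on the other side, fold the case $(\mathfrak{v}(b),p)=1$ into the choice $\alpha = 1$, and spell out the norm identity and the irreducibility and ramification checks in more detail than the paper does.
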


\begin{proof}
    Let $\omega \in W_m(K)$ correspond to a totally ramified cyclic maximal subfield of $A$. By \cite[Chapter VII, Theorem 26]{albert_book}, there exists $b \in K^*$ such that $A \cong  [\omega, b)_{K}$.  If $(\mathfrak{v}(b),p)=1$ then $\mathfrak{v}(\sqrt[p^m]{b}) \in \frac{1}{p^m} \mathbb{Z} \setminus \frac{1}{p^{m-1}} \mathbb{Z}$. Hence $K(\sqrt[p^m]{b})$ is a totally ramified maximal subfield of $A$. Otherwise, pick an element $u \in K_{\omega}$ with $\mathfrak{v}(u) \in \frac{1}{p^m} \mathbb{Z} \setminus \frac{1}{p^{m-1}} \mathbb{Z}$. Note that $\mathfrak{v}(N_{K_{\omega/K}}(u)) \in \mathbb{Z} \setminus p\mathbb{Z}$. \\
    \indent Consider the element $z=uy$. We have 
    \begin{align*}
        z^{p^m} = N_{K_{\omega/K}}(u)b \in K^*
    \end{align*}
     Since $\mathfrak{v}(b) \in p\mathbb{Z}$,  $\mathfrak{v}(z) \in \frac{1}{p^m} \mathbb{Z} \setminus \frac{1}{p^{m-1}} \mathbb{Z}$ and hence $p^m$ is the minimal power of $z$ that lives in $K^*$. Therefore, $K(z)$ is a  totally ramified purely inseparable maximal subfield of $A$.
\end{proof}

\section{From purely inseparable to cyclic}
Now we show that the existence of totally ramified purely inseparable maximal subfield in $A$ implies  the existence of totally ramified cyclic maximal subfield for various cases. We first observe that a totally ramified purely inseparable extension of $K$ is necessarily simple:

\begin{lemma}\label{lem:simple}
    Let $L/K$ be a  totally ramified purely inseparable extension  of degree $p^m$. Then $L/K$ is simple, generated by $y$ satisfying $y^{p^m}=b \in K^*$ with $(\mathfrak{v}(b), p ) =1$.
\end{lemma}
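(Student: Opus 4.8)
The plan is to exhibit the generator explicitly as a uniformizer of $L$ and then show by a valuation-theoretic degree count that it already generates all of $L$. Since $L/K$ is totally ramified of degree $p^m$, its value group is $\Gamma_L = \frac{1}{p^m}\mathbb{Z}$, so the first step is to choose any $y \in L$ with $\mathfrak{v}(y) = \frac{1}{p^m}$, that is, a uniformizer of $L$.

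Next I would verify that $b := y^{p^m}$ is the desired element of $K^*$. Because $L/K$ is purely inseparable of degree $p^m$, its exponent $e$ (the least integer with $L^{p^e} \subseteq K$) satisfies $p^e \leq [L:K] = p^m$, and hence $e \leq m$; consequently $y^{p^m} = (y^{p^e})^{p^{m-e}} \in K$, and it is nonzero, so $b \in K^*$. Computing valuations then gives $\mathfrak{v}(b) = p^m \cdot \mathfrak{v}(y) = 1$, so in particular $(\mathfrak{v}(b), p) = 1$.

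It remains to check that $K(y) = L$, which is where the real content lies. Since $\mathfrak{v}(y) = \frac{1}{p^m}$ lies in $\Gamma_{K(y)}$ and, together with $\Gamma_K = \mathbb{Z}$, generates $\frac{1}{p^m}\mathbb{Z}$, the ramification index satisfies $[\Gamma_{K(y)} : \Gamma_K] \geq p^m$. By the general inequality $[M:K] \geq [\Gamma_M : \Gamma_K]$ valid for any finite extension $M/K$, we obtain $[K(y):K] \geq p^m$. On the other hand $K(y) \subseteq L$ forces $[K(y):K] \leq [L:K] = p^m$. Squeezing these bounds yields $[K(y):K] = p^m = [L:K]$, hence $K(y) = L$, generated by $y$ with $y^{p^m} = b$ as required.

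I do not expect a serious obstacle. The only point requiring care is that one must choose $y$ to be a uniformizer: an arbitrary generator of the purely inseparable extension need not have valuation $\frac{1}{p^m}$, and it is precisely the choice $\mathfrak{v}(y)=\frac{1}{p^m}$, combined with the observation that pure inseparability forces $y^{p^m}\in K$, that makes both the ramification lower bound and the value computation $\mathfrak{v}(b)=1$ work simultaneously. The remainder is a routine squeeze between the ramification-index lower bound and the degree upper bound coming from $K(y)\subseteq L$.
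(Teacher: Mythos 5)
Your proof is correct and follows essentially the same route as the paper's: choose $y\in L$ whose valuation has denominator exactly $p^m$, use pure inseparability to get $y^{p^m}=b\in K^*$, and conclude $K(y)=L$ by a valuation-theoretic degree count. The only cosmetic differences are that you insist on a uniformizer (giving $\mathfrak{v}(b)=1$ exactly, where the paper allows any $\mathfrak{v}(y)\in \frac{1}{p^m}\mathbb{Z}\setminus\frac{1}{p^{m-1}}\mathbb{Z}$ and gets $(\mathfrak{v}(b),p)=1$) and that you phrase the degree count as the squeeze $[K(y):K]\geq[\Gamma_{K(y)}:\Gamma_K]\geq p^m$, whereas the paper equivalently observes that $p^m$ is the least power of $y$ lying in $K^*$.
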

\begin{proof}
   Since $L/K$ is  totally ramified purely inseparable extension, there exists an element $y \in L$ such that $\mathfrak{v}(y) \in \frac{1}{p^m} \mathbb{Z}\setminus \frac{1}{p^{m-1}} \mathbb{Z}$. Since the extension is purely inseparable, $y^{p^m} = b \in K^*$. Note that $p^m$  is the smallest power of $y$ that lives in $K^*$ since $\mathfrak{v}(z) \in \frac{1}{p^m} \mathbb{Z}$. Therefore, $y$ generates $L$ over $K$ and satisfies $(\mathfrak{v}(b), p)=1$.
\end{proof}
\begin{remark}\normalfont\label{rem:general}
   Suppose a $p$-algebra $A$  over $K$ of degree $p^m$ contains a totally ramified purely inseparable maximal subfield. By the above lemma, it is of the form $K(\sqrt[p^m]{b})$ where $(\mathfrak{v}(b), p)= 1$. Therefore, $A$ is necessarily cyclic and  $A\cong [\omega, b)_K$ for some $\omega \in W_m(K)$ (\cite[Chapter IV, Theorem 4.4.10]{jacobson_book}, \cite[Chapter VII, Theorem 26]{albert_book}).
\end{remark}

We prove the following main theorem of this paper.
\begin{thm}\label{thm:main}
    Let $A$ be a  $p$-algebra over a complete discrete valued field $K$ with residue $k$. Suppose $A$ contains a totally ramified purely inseparable maximal subfield. Then $A$ contains a totally ramified cyclic maximal subfield in each of the  following cases:
    \begin{enumerate}
        \item $k$ is perfect,
        \item the degree of $A$ is $p$ or $p^2$,
        \item  $\dim_{\mathbb{F}_p}(k/\mathcal{P}(k)) \geq 2$.
    \end{enumerate}
\end{thm}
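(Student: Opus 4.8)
The plan is to use Lemma~\ref{lem:simple} and Remark~\ref{rem:general} to convert the statement into a problem about presentations of $A$. Since $A$ contains a totally ramified purely inseparable maximal subfield, it is cyclic and $A \cong [\omega,b)_K$ for some $\omega \in W_m(K)$ and some $b \in K^*$ with $(\mathfrak{v}(b),p)=1$. Because the cyclic algebra $[\omega',b')_K$ always contains $K_{\omega'}$ as a maximal subfield by construction, it suffices to produce a single presentation $A \cong [\omega',b')_K$ in which the cyclic extension $K_{\omega'}/K$ is totally ramified. The tools for re-presenting $A$ are the symbol relations of \S\ref{sec:sym}: additivity (\ref{item:sym1}) together with (\ref{item:sym4}) lets me replace $\omega$ by $\omega + (b,0,\dots,0)$; the scaling $[\omega,b)_K \cong [\omega,\gamma^{p^m}b)_K$ lets me adjust $\mathfrak{v}(b)$ by multiples of $p^m$ while keeping it prime to $p$; and more generally the splitting criterion (\ref{item:sym5}) permits the replacement $\omega \mapsto \omega + \eta$ for \emph{any} $\eta$ with $b \in N_{K_\eta/K}(K_\eta)$.

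First I would arrange the bottom layer of the Witt tower to ramify. Using the scaling relation I may assume $\mathfrak{v}(b)$ is negative, prime to $p$, and, after choosing $\gamma$ with $\mathfrak{v}(\gamma)\ll 0$, smaller than $\mathfrak{v}(\omega_1)$. Replacing $\omega$ by $\omega' = \omega + (b,0,\dots,0)$ (legitimate by (\ref{item:sym1}) and (\ref{item:sym4})) then gives a first coordinate $\omega_1' = \omega_1 + b$ with $\mathfrak{v}(\omega_1') = \mathfrak{v}(b) < 0$ prime to $p$, so the degree-$p$ subextension defined by $x_1^p - x_1 = \omega_1'$ is totally (wildly) ramified. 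The remaining task is to guarantee that each subsequent layer $K(x_1,\dots,x_i)/K(x_1,\dots,x_{i-1})$ is also totally ramified; by the Artin--Schreier description this amounts to the successive data $f_i$ --- governed for $i=2$ by (\ref{eqn2:p2}) and in general by the Witt-addition carries --- having valuation negative and prime to $p$ in the value group $\tfrac{1}{p^{i-1}}\mathbb{Z}$ of the (inductively) totally ramified field $K(x_1,\dots,x_{i-1})$.

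The main obstacle is precisely this control of the higher layers: passing from $\omega$ to $\omega'$ perturbs $\omega_2,\dots,\omega_m$ through the Witt carries, and over an arbitrary residue field there is no reason for the resulting tower to stay totally ramified --- which is exactly why the theorem is proved only in the three listed cases. When $\deg A = p$ the bottom layer is the whole extension and the construction above already finishes the proof; when $\deg A = p^2$ there is a single carry term, which I would compute explicitly from (\ref{eqn2:p2}) and then, exploiting the second independent knob coming from adding vectors $(0,c)$ with $b' \in N_{K_c/K}$ via (\ref{item:sym5}), arrange that the dominant term of $\omega_2'$ has valuation negative and prime to $p$ in $\tfrac1p\mathbb{Z}$, forcing the second layer to ramify. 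When $k$ is perfect I would use $k = k^p$ to perform an Artin--Schreier--Witt reduction at every level: since the carries make the higher coordinates highly negative, perfectness lets me replace each layer's data by one of negative valuation prime to $p$, so that the bottom-layer ramification propagates to total ramification of $K_{\omega'}$. Finally, when $\dim_{\mathbb{F}_p}(k/\mathcal{P}(k)) \ge 2$ I would invoke Lemma~\ref{lem:disjoint} to obtain extra independent cyclic extensions, providing enough freedom to choose the modifying vector $\eta$ --- subject to the norm condition $b \in N_{K_\eta/K}(K_\eta)$ --- so that every layer of $K_{\omega+\eta}$ is forced to ramify.
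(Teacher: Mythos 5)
Your reduction via Lemma~\ref{lem:simple} and Remark~\ref{rem:general}, and your handling of the degree-$p$ case (replace $\omega$ by $\omega+b$ using (\ref{item:sym1}) and (\ref{item:sym4}) after arranging $\mathfrak{v}(b)<\min\{0,\mathfrak{v}(\omega)\}$), are correct and essentially match the paper. But for the three hard cases your text states goals rather than arguments, and in two of them the proposed mechanism is not the one that works. For $k$ perfect, the claim that ramification of the bottom layer ``propagates'' to the whole tower is precisely what must be proved, and your proposed device --- a level-by-level Artin--Schreier--Witt reduction replacing each layer's data by one of negative valuation prime to $p$ --- is never constructed; carrying it out would require controlling the Witt carries over the intermediate fields, exactly the complexity you yourself flag as intractable. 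The paper avoids all layer-by-layer control with a short indirect argument: with $\mathfrak{v}(\omega_1+b)<0$ prime to $p$, the \emph{unique} degree-$p$ subextension of the cyclic extension $K_{\omega'}$ is ramified; if $K_{\omega'}$ were not totally ramified, perfectness of $k$ would make the residue extension $k_{\omega'}/k$ a nontrivial separable cyclic extension, and the inertial lift of its degree-$p$ subextension would be an unramified degree-$p$ subfield of $K_{\omega'}$, contradicting that uniqueness.

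For degree $p^2$ you are missing the key maneuver. The paper first uses $F_{\omega}\cong F_{\omega^{p^r}}$ to assume $\omega_1,\omega_2\in K^p$; then every carry term produced by adding $(b,0)$ has the form $c^{pi}b^{p-i}$ and pairs trivially with $b$ (Lemma~\ref{lem:split}), so the carry is absorbed outright (Lemma~\ref{lem:add}) and the vector $(\omega_1+b,\omega_2)$ already satisfies the valuation hypotheses of Lemma~\ref{lem:eta}; the ``second knob'' of adding $(0,c)$ subject to a norm condition, which you invoke, is never needed, and you give no way to produce such a $c$ anyway. The most serious gap is case (3): your plan stays inside the symbol-modification framework and hopes that Lemma~\ref{lem:disjoint} supplies a vector $\eta$ with $b\in N_{K_\eta/K}(K_\eta)$ making $K_{\omega+\eta}$ totally ramified, but you offer no mechanism for verifying the norm condition or for controlling ramification through $m-1$ carry-laden layers --- the very obstruction that blocks the direct approach for $m\geq 3$. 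The paper's proof is different in kind: it constructs two auxiliary division algebras $D_i=[\omega_i,b)_K$ whose unramified cyclic maximal subfields are inertial lifts of linearly disjoint $k_1,k_2$ (Lemmas~\ref{lem:disjoint} and \ref{lem:division}), notes $\overline{D_i}=k_i$ by Remark~\ref{rem:semiramified}, and then applies Lemma~\ref{Clem}: since $A$, $D_1$, $D_2$ share the purely inseparable maximal subfield $K(\sqrt[p^m]{b})$, they share a cyclic maximal subfield $L$, whose residue lies in $k_1\cap k_2=k$, forcing $L/K$ to be totally ramified. Without this external input (or something comparable), your proposal establishes only the degree-$p$ case.
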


   The proof is demonstrated in the following sections. See \S\ref{sec:perfect}, \S\ref{sec:psquare} and \S\ref{sec:residuedim}.

\subsection{When $k$ is perfect}\label{sec:perfect}
In this section, we assume that the residue field $k$ of $K$ is perfect.  
\begin{thm}
    Let $A$ be a  $p$-algebra over $K$ whose  residue field $k$ is perfect.  Suppose $A$ contains a totally ramified purely inseparable maximal subfield, then  it contains a totally ramified cyclic maximal subfield.
\end{thm}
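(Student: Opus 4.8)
The plan is to produce a totally ramified cyclic extension $M/K$ of degree $p^m$ that splits $A$; since $[M:K]$ equals the degree of $A$, such an $M$ then embeds into $A$ as a (necessarily maximal) subfield, yielding the desired totally ramified cyclic maximal subfield. By Remark \ref{rem:general} I may assume $A\cong[\omega,b)_K$ with $\omega\in W_m(K)$ and $(\mathfrak{v}(b),p)=1$, and by (\ref{item:sym5}) splitting $A$ over an extension is governed entirely by a norm condition, which is what I intend to exploit.

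First I would normalize $\omega$. Since $k$ is perfect I can extract $p$-th roots of residues, so by subtracting elements of $\mathcal{P}(W_m(K))$ (which changes neither $K_\omega$ nor the class of $A$, as $[\mathcal{P}(\xi),b)_K$ is split) I reduce $\omega$ to a standard form $\omega=\omega_{\mathrm{ur}}+\omega_{\mathrm{ram}}$, where $\omega_{\mathrm{ur}}\in W_m(k)$ has constant entries (so $K_{\omega_{\mathrm{ur}}}/K$ is unramified) and every entry of $\omega_{\mathrm{ram}}$ is a principal part of strictly negative valuation (so $K_{\omega_{\mathrm{ram}}}/K$ is totally ramified). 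By (\ref{item:sym1}) this decomposes the class as $A=[\omega_{\mathrm{ur}},b)_K+[\omega_{\mathrm{ram}},b)_K$.

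Next I would choose $M$. Using the fact recalled in \S\ref{sec:cyclic} that every cyclic extension of degree $p^s$ embeds in one of degree $p^{s+1}$, I enlarge $K_{\omega_{\mathrm{ram}}}$ to a totally ramified cyclic extension $M/K$ of degree $p^m$ (keeping total ramification at each new level, again using perfectness of $k$). This choice serves two purposes. Since $K_{\omega_{\mathrm{ram}}}\subseteq M$, the vector $\omega_{\mathrm{ram}}$ becomes a $\mathcal{P}$-boundary over $M$, so $[\omega_{\mathrm{ram}},b)_M$ is split by (\ref{item:sym5}). And since $M$ is totally ramified while $K_{\omega_{\mathrm{ur}}}$ is unramified, $K_{\omega_{\mathrm{ur}}}\cap M=K$ and $K_{\omega_{\mathrm{ur}}}\cdot M/M$ is unramified with residue field $k_{\omega_{\mathrm{ur}}}$; by (\ref{item:sym5}), $[\omega_{\mathrm{ur}},b)_M$ is split if and only if $b\in N_{K_{\omega_{\mathrm{ur}}}\cdot M/M}(K_{\omega_{\mathrm{ur}}}\cdot M)$.

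The main step, and the place where perfectness of $k$ is used decisively, is verifying this last norm condition. Because $\mathfrak{v}_M(b)$ is divisible by $p^m$, hence by $[K_{\omega_{\mathrm{ur}}}\cdot M:M]$, I can peel off the norm of a power of a uniformizer of $M$ and reduce to showing that the residue of the remaining unit lies in $N_{k_{\omega_{\mathrm{ur}}}/k}(k_{\omega_{\mathrm{ur}}}^*)$; for the unramified extension $K_{\omega_{\mathrm{ur}}}\cdot M/M$, completeness and Hensel's lemma promote a residual norm to an actual norm. That residual condition is automatic: it is equivalent to the splitting of a cyclic $p$-algebra over $k$, and since $k$ is perfect it has no nontrivial purely inseparable extension, so by the cyclicity theorem (\cite[Chapter IV, Theorem 4.4.10]{jacobson_book}) the $p$-primary part of $\operatorname{Br}(k)$ vanishes and every such algebra is split. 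Combining the two splittings gives $A\otimes_K M=[\omega_{\mathrm{ur}},b)_M+[\omega_{\mathrm{ram}},b)_M=0$, so $M$ splits $A$ and embeds as a totally ramified cyclic maximal subfield. I expect the two points needing most care to be the standard-form reduction of $\omega$ over a general complete discrete valued field with perfect residue, and the construction of the totally ramified cyclic overfield $M$ of degree $p^m$; the conceptual heart, however, is the vanishing of $\operatorname{Br}(k)[p^\infty]$ for perfect $k$.
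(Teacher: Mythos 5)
Your proposal is correct in substance, but it takes a genuinely different route from the paper. The paper's proof never decomposes $\omega$ and never leaves $A$: after normalizing so that $\mathfrak{v}(b)<\min(0,\mathfrak{v}(\omega_1))$, it replaces $\omega$ by $\omega'=\omega+(b,0,\dots,0)$ using (\ref{item:sym1}) and (\ref{item:sym4}), so that $K_{\omega'}$ is automatically a cyclic maximal subfield of $A\cong[\omega',b)_K$, and then proves total ramification of $K_{\omega'}$ by a uniqueness argument: were the residue extension $k_{\omega'}/k$ nontrivial, it would be separable (the only use of perfectness) and cyclic, so an inertial lift would give an unramified degree-$p$ subextension of $K_{\omega'}/K$; but a cyclic extension has a \emph{unique} degree-$p$ subextension, namely $K(x_1)$ with $x_1^p-x_1=\omega_1+b$, and that one is ramified because $\mathfrak{v}(\omega_1+b)=\mathfrak{v}(b)$ is negative and prime to $p$. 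You instead manufacture an abstract totally ramified cyclic splitting field $M$ of degree $p^m$, via the decomposition $\omega\equiv\omega_{\mathrm{ur}}+\omega_{\mathrm{ram}}$ modulo $\mathcal{P}(W_m(K))$, the norm criterion (\ref{item:sym5}), Hensel lifting of residual norms along the unramified extension $K_{\omega_{\mathrm{ur}}}\cdot M/M$, the vanishing of the $p$-primary Brauer group of the perfect residue field (Albert: every $p$-algebra is split by a purely inseparable extension), and finally the splitting-iff-embedding theorem for degree-$p^m$ separable extensions. What your route buys is the conceptual cause of the theorem, $\operatorname{Br}(k)[p^\infty]=0$, and indifference to the explicit Witt addition formulas; what the paper's route buys is brevity and an explicit subfield $K_{\omega'}$ rather than an a priori unrelated splitting field. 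Two remarks on the steps you flagged, both fillable: (i) the standard-form reduction needs a coefficient field $k\subseteq K$ (Cohen's theorem, available here since $\operatorname{char}K=p$) plus the perfectness-based trick of subtracting $\mathcal{P}$-terms of the shape $a^{1/p}t^{-j}$ to push leading terms $at^{-pj}$ to higher valuation, so that the entries of $\omega_{\mathrm{ram}}$ can be arranged with first nonzero entry of negative valuation prime to $p$ --- mere negativity of valuations does not by itself force total ramification; and (ii) once that is arranged, the cleanest verification that $M$ (and $K_{\omega_{\mathrm{ram}}}$) is totally ramified of full degree is exactly the paper's inertial-lift-versus-unique-degree-$p$-subextension argument quoted above. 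So the paper's key mechanism reappears inside your construction of $M$, but your global architecture --- split the Brauer class into unramified and ramified parts, kill each part over $M$, then embed $M$ --- is genuinely different and works uniformly in $m$.
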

\begin{proof}
    The proof is similar to the proof of  \cite[Appendix B, Theorem B.1]{srimathy2024genus} with some  modifications.  Let the degree of $A$ be $p^m$. By Remark \ref{rem:general}, $A \cong [\omega, b)$  where $\omega  = (\omega_1, \omega_2, \cdots, \omega_m) \in W_m(K)$  and  $(v(b),p) =1$.  By \S\ref{sec:sym}, we may assume that $v(b)<min(0, v(\omega_1))$. Using (\ref{item:sym1}) and (\ref{item:sym4}), we get 
\begin{align*}
    A \cong [\omega', b):=[(\omega + (b,0,0 \cdots 0) , b)]
     \end{align*}
 where $ \omega' = (\omega_1 +b, \omega_2', \cdots, \omega_m') \in W_m(K)$ for some $\omega_2', \cdots, \omega_m' \in K$. The cyclic extension  $K_{\omega'}$  is given by $K_{\omega'} \cong  K(x_1, x_2, \cdots, x_m)$ where 
 \begin{align*}
     (x_1^p, x_2^p, \cdots, x_m^p)- (x_1, x_2, \cdots, x_m) = \omega'
 \end{align*}  
 in  $W_m(K_{\omega'})$. We claim that $K_{\omega'}$ is totally ramified which will yield the theorem. To see this, let $k_{\omega'}$ denote the residue field of $K_{\omega'}$. Suppose $K_{\omega'}/K$ is not totally ramified, then   $k_{\omega'}$ is a non-trivial extension of $k$. Since $k$ is perfect,  $k_{\omega'}/k$ is separable.  Also, since $K_{\omega'}/K$ is cyclic, so is $k_{\omega'}/k$ (\cite[Chapter III, \S5, Theorem 3]{serre_local})).  Let $f \subset k_{\omega'}$ be the unique degree $p$ subextension over $k$ and let $F \subset K_{\omega'}$ be the inertial lift  of $f$ \cite[Chapter III, \S5, Corollary 2]{serre_local}). Therefore $F$ is an unramified  degree $p$ extension of $K$ inside $K_{\omega'}$. But $K_{\omega'}/K$ is cyclic and contains the unique degree $p$ extension $k(x_1)$ defined by 

\begin{align*}
x_1^p - x_1 = \omega_1 + b
\end{align*}
with $v(\omega_1 + b) < 0$ and  $v(x_1) = \frac{1}{p}\mathbb{Z}$ by the assumption on $b$. Hence $k(x_1)$ is ramified leading to contradiction. Therefore $K_{\omega'}/K$ is totally ramified.  
\end{proof}

\subsection{When the degree of $A$ is $p$ or  $p^2$} \label{sec:psquare}
From now on we do not assume that the residue field $k$ of $K$ is perfect. In this case, when the degree of $A$ is $p$, the proof of the claim involves a neat argument on the level of the generators.

\begin{thm}
Let $A$ be a $p$-algebra of degree $p$ over a complete discrete valued field $K$ that contains a  totally ramified purely inseparable maximal subfield, then $A$ contains a totally ramified cyclic  maximal subfield.
\end{thm}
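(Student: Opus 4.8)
The plan is to use the explicit description of degree-$p$ cyclic algebras in terms of Artin--Schreier data and push the totally ramified purely inseparable subfield into a single symbol manipulation. By Remark~\ref{rem:general}, the hypothesis gives $A \cong [\omega, b)_K$ where now $m=1$, so $\omega = \omega_1 \in K$ is a single element and $b \in K^*$ with $(\mathfrak{v}(b), p) = 1$. The corresponding cyclic extension $K_\omega = K(x_1)$ is the Artin--Schreier extension defined by $x_1^p - x_1 = \omega_1$, and the whole problem reduces to choosing $\omega$ within its Brauer class so that this extension becomes totally ramified.

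First I would normalize $\omega_1$ using the symbol relations of \S\ref{sec:sym}. Since $(\mathfrak{v}(b), p) = 1$ we may adjust $b$ by $p$-th powers (using $[\omega, b)_K \cong [\omega, \gamma^p b)_K$) to arrange $\mathfrak{v}(b) < 0$; more precisely I would ensure $\mathfrak{v}(b) < \min(0, \mathfrak{v}(\omega_1))$ and $\mathfrak{v}(b)$ coprime to $p$. Then, exactly as in the perfect-residue-field argument, I would apply \eqref{item:sym1} and \eqref{item:sym4} to replace $\omega$ by $\omega' = \omega_1 + b$ without changing the Brauer class: since $[(b), b)_K = 0$ by \eqref{item:sym4}, we get $A \cong [\omega_1 + b, b)_K$. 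The point is that $\mathfrak{v}(\omega_1 + b) = \mathfrak{v}(b) < 0$ and is coprime to $p$.

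The key step is then to check that the new Artin--Schreier extension $K_{\omega'} = K(x_1)$ with $x_1^p - x_1 = \omega_1 + b$ is totally ramified over $K$. This is the standard valuation-theoretic criterion for Artin--Schreier extensions: if $\mathfrak{v}(\omega_1 + b)$ is negative and coprime to $p$, then comparing valuations in $x_1^p - x_1 = \omega_1 + b$ forces $p\,\mathfrak{v}(x_1) = \mathfrak{v}(x_1^p) = \mathfrak{v}(\omega_1 + b)$ (the term $x_1$ has strictly larger valuation and cannot cancel the dominant term), so $\mathfrak{v}(x_1) = \tfrac{1}{p}\mathfrak{v}(\omega_1 + b) \in \tfrac{1}{p}\mathbb{Z} \setminus \mathbb{Z}$. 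Hence $[\Gamma_{K_{\omega'}} : \Gamma_K] = p = [K_{\omega'} : K]$, and by the fundamental equality the extension is totally ramified. Thus $K_{\omega'}$ is the desired totally ramified cyclic maximal subfield of $A$.

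The main obstacle, and the reason this case is genuinely easier than the general one, is that for $m=1$ a single Artin--Schreier symbol carries all the information: the one relation $x_1^p - x_1 = \omega_1 + b$ has a dominant term whose valuation is coprime to $p$, so ramification is forced with no interference from higher Witt components. In the perfect-residue-field proof one had to rule out an unramified subextension indirectly; here the totally ramified conclusion drops out directly from the valuation of the generator, so I expect no serious difficulty beyond verifying that the valuation of $x_1$ is exactly $\tfrac{1}{p}\mathfrak{v}(\omega_1 + b)$ and that $\omega_1 + b$ has not been accidentally made a $p$-th power difference.
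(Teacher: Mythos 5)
Your proof is correct, but it takes a different route from the paper's own argument for the degree-$p$ case. You work at the level of Brauer classes: using \eqref{item:sym1} and \eqref{item:sym4} you replace $[\omega_1,b)_K$ by the isomorphic algebra $[\omega_1+b,\,b)_K$ (isomorphism, not just Brauer equivalence, follows since both have degree $p$), and then apply the standard Artin--Schreier valuation criterion to see that $K_{\omega_1+b}$ is totally ramified; your closing worry about $\omega_1+b$ landing in $\mathcal{P}(K)$ is indeed dispatched by that same valuation computation, since an element of $\mathcal{P}(K)$ of negative value has value in $p\mathbb{Z}$. The paper instead argues inside the algebra itself: writing $A \cong K\langle x,y : x^p-x=\omega,\ y^p=b,\ yxy^{-1}=x+1\rangle$ with $\mathfrak{v}(b)<\mathfrak{v}(\omega)$, it takes the single element $z=x+y$, which satisfies $z^p-z=\omega+b$ (citing a lemma of Chapman) and has $\mathfrak{v}(z)=\mathfrak{v}(y)\in\frac{1}{p}\mathbb{Z}\setminus\mathbb{Z}$, so $K(z)$ is directly exhibited as the desired subfield. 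The two arguments produce the very same cyclic subfield $K_{\omega+b}$: the paper's generator computation is shorter and makes the embedding of $K_{\omega+b}$ into $A$ explicit without invoking the symbol calculus, while your version is exactly the $m=1$ specialization of the template the paper uses for degree $p^2$ (Lemma \ref{lem:add} followed by Lemma \ref{lem:eta}), which is arguably more systematic --- for $m=1$ there are no Witt-addition correction terms, so the manipulation $[\omega_1,b) \mapsto [\omega_1+b,b)$ needs no analogue of Lemma \ref{lem:split}. Both are complete proofs; yours trades the ad hoc identity $(x+y)^p-(x+y)=\omega+b$ for routine symbol relations plus a valuation check.
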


\begin{proof}
 In this case, $A=[\omega,b)_K$ for some $\omega \in K$ and $b\in K^*$. By Remark \ref{rem:general}, we can take $(\mathfrak{v}(b), p ) =1$. Moreover, we can assume that $\mathfrak{v}(b)<\mathfrak{v}(\omega)$ (\S \ref{sec:sym}).  Now
    \begin{align*}
        A \cong K\langle x,y : x^p-x=\omega, y^p=b, yxy^{-1}=x+1\rangle
    \end{align*}
    Consider the element $z=x+y$. Clearly $\mathfrak{v}(z) = \mathfrak{v}(y) = \frac{1}{p}\mathbb{Z}$. Moreover, $z$ satisfies $z^p-z=\omega+b$ (see \cite[lemma 3.1]{Chapman:2015}), and therefore $K(z)$ is a totally ramified cyclic maximal subfield of $A$.
\end{proof}

Now assume that the degree of $A$ is $p^2$. 
Let $A$ contain a totally ramified purely inseparable maximal subfield $L$. By Lemma \ref{lem:simple},   $L \cong K(\sqrt[p^2]{b})$  with $(v(b),p) = 1$. 
By Remark \ref{rem:general}, there exists $\mathbf{\omega} = (\omega_1, \omega_2) \in W_2(K)$ such that  $A \cong [\mathbf{\omega}, b)$.  Let us start with a few lemmas.\\

\begin{lemma} \label{lem:eta}
Let $\mathbf{\eta}=(\eta_1,\eta_2) \in W_2(K)$. If $\mathfrak{v}(\eta_1)<0$, $\mathfrak{v}(\eta_1)<\mathfrak{v}(\eta_2)$ and $\mathfrak{v}(\eta_1)\notin p\mathbb{Z}$, then  $K_{\eta}$ is totally ramified. 
\end{lemma}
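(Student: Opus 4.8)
The plan is to establish total ramification of $K_{\eta}/K$ in two stages, working directly with the explicit generators $x_1, x_2$ of the degree $p^2$ cyclic extension given by the defining equations (\ref{eqn1:p2}) and (\ref{eqn2:p2}). Throughout I write $n := -\mathfrak{v}(\eta_1) > 0$, so that $(n,p) = 1$ by the hypothesis $\mathfrak{v}(\eta_1) \notin p\mathbb{Z}$.

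First I would handle the bottom layer $K(x_1)/K$, which is the Artin--Schreier extension $x_1^p - x_1 = \eta_1$. Since $\mathfrak{v}(\eta_1) = -n < 0$, the element $x_1$ cannot have nonnegative valuation, so $\mathfrak{v}(x_1^p) = p\,\mathfrak{v}(x_1) < \mathfrak{v}(x_1)$ and hence $p\,\mathfrak{v}(x_1) = \mathfrak{v}(\eta_1)$, giving $\mathfrak{v}(x_1) = -n/p$. As $(n,p)=1$, this lies in $\frac{1}{p}\mathbb{Z}\setminus\mathbb{Z}$, so $K(x_1)/K$ is totally ramified of degree $p$ with value group $\frac{1}{p}\mathbb{Z}$.

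Next I would analyze the right-hand side $\beta := \eta_2 - \sum_{i=1}^{p-1} \frac{(p-1)!}{i!(p-i)!}\,\eta_1^i x_1^{p-i}$ of (\ref{eqn2:p2}) as an element of $K(x_1)$. Using $\mathfrak{v}(x_1) = -n/p$ one finds $\mathfrak{v}(\eta_1^i x_1^{p-i}) = -\frac{n}{p}\bigl((p-1)i + p\bigr)$, which is strictly decreasing in $i$; hence the summands have pairwise distinct valuations and the term $i = p-1$ (whose coefficient equals $1$) dominates, with valuation $-\frac{(p^2-p+1)n}{p}$. The hypothesis $\mathfrak{v}(\eta_1) < \mathfrak{v}(\eta_2)$ guarantees $\mathfrak{v}(\eta_2) > -n > -\frac{(p^2-p+1)n}{p}$, so $\eta_2$ does not interfere, and therefore $\mathfrak{v}(\beta) = -\frac{(p^2-p+1)n}{p}$.

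Finally, applying the same Artin--Schreier valuation argument to $x_2^p - x_2 = \beta$ over $K(x_1)$ yields $\mathfrak{v}(x_2) = \frac{1}{p}\mathfrak{v}(\beta) = -\frac{(p^2-p+1)n}{p^2}$. Since $p^2 - p + 1 \equiv 1 \pmod{p}$ and $(n,p)=1$, the numerator is coprime to $p$, so $\mathfrak{v}(x_2) \in \frac{1}{p^2}\mathbb{Z}\setminus\frac{1}{p}\mathbb{Z}$. Thus $K(x_1,x_2)/K(x_1)$ has ramification index $p$, which forces $\Gamma_{K_{\eta}} = \frac{1}{p^2}\mathbb{Z}$ and hence total ramification of $K_{\eta}/K$. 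I expect the main obstacle to be the bookkeeping in the third paragraph: one must confirm that the dominant summand is genuinely the $i = p-1$ term, that its coefficient is a unit so that no cancellation can lower $\mathfrak{v}(\beta)$, and that the resulting valuation of $x_2$ has exact denominator $p^2$; once these are in place, the conclusion rests on the elementary congruence $p^2 - p + 1 \equiv 1 \pmod{p}$.
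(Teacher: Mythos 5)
Your proof is correct and takes essentially the same route as the paper's: compute $\mathfrak{v}(x_1)=\frac{1}{p}\mathfrak{v}(\eta_1)$ from the first Artin--Schreier equation, identify the dominant term $-\eta_1^{p-1}x_1$ (the $i=p-1$ summand, with coefficient $1$) on the right-hand side of the second equation, and conclude $\mathfrak{v}(x_2)=\frac{1}{p}\bigl((p-1)\mathfrak{v}(\eta_1)+\mathfrak{v}(x_1)\bigr)\in\frac{1}{p^2}\mathbb{Z}\setminus\frac{1}{p}\mathbb{Z}$. You merely supply details the paper leaves implicit --- the pairwise-distinct valuations of the summands ruling out cancellation, the comparison showing $\eta_2$ cannot interfere, and the congruence $p^2-p+1\equiv 1\pmod{p}$ guaranteeing the exact denominator $p^2$ --- so your write-up is a fleshed-out version of the same argument.
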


\begin{proof}
The field $K_{\eta}$ is defined by the equations (see (\ref{eqn1:p2}) and (\ref{eqn2:p2}))
\begin{align}
    x_1^p-x_1&=\eta_1 \label{eqn:x1}\\ 
    x_2^p-x_2&=\eta_2-\sum_{i=1}^{p-1} \frac{(p-1)!}{i!(p-i)!} \eta_1^k x_1^{p-i}\label{eqn:x2}
\end{align}
From (\ref{eqn:x1}) we deduce that since  $\mathfrak{v}(\eta_1)<0$, the value of $x_1$ must be negative, and thus $\mathfrak{v}(x_1^p-x_1)=\mathfrak{v}(x_1^p)=p\mathfrak{v}(x_1)=\mathfrak{v}(\eta_1)$. Therefore
\begin{align*}
  \mathfrak{v}(x_1)=\frac{1}{p} \mathfrak{v}(\eta_1)  \in \frac{1}{p}\mathbb{Z}
\end{align*}
 Analyzing (\ref{eqn:x2}), we see that the term of smallest value on the right-hand side is $-\eta_1^{p-1} x_1$. Therefore
 \begin{align*}
 \mathfrak{v}(x_2)=\frac{1}{p}((p-1)\mathfrak{v}(\eta_1)+\mathfrak{v}(x_1)) \in \frac{1}{p^2}\mathbb{Z}
 \end{align*}
 and the statement thus follows.
\end{proof}

\begin{lemma}\label{lem:split}
 The algebra  $[(0,rc^{pi} b^{p-i}), b)_K $ is split     for all $c \in K, b\in K^*$,  $r,i\in \mathbb{Z}$.
\end{lemma}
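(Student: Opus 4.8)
The plan is to reduce at once to a degree-$p$ symbol and then to verify the splitting criterion (\ref{item:sym5}) by an explicit norm computation. Applying the relation (\ref{item:sym3}) to remove the leading zero gives $[(0,rc^{pi}b^{p-i}),b)_K = [rc^{pi}b^{p-i},b)_K$, a cyclic $p$-algebra of degree $p$; set $\omega_0 = rc^{pi}b^{p-i}$. If $\omega_0 \in \mathcal{P}(K)$ --- in particular if $c=0$ or $p\mid r$, when $\omega_0=0$ --- the algebra is already split and there is nothing to prove, so I may assume that $L:=K(x)$ with $x^{p}-x=\omega_0$ is a cyclic degree-$p$ field extension of $K$. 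By (\ref{item:sym5}) it then suffices to prove $b \in N_{L/K}(L^{*})$.

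The key step is to compute $N_{L/K}(x)$ itself. Writing $\mathrm{Gal}(L/K)=\langle\sigma\rangle$ with $\sigma(x)=x+1$, one has $N_{L/K}(x)=\prod_{j=0}^{p-1}(x+j)$, and since $\prod_{j=0}^{p-1}(T+j)=T^{p}-T$ over $\mathbb{F}_{p}\subseteq K$ this evaluates to $x^{p}-x=\omega_0$. I would now exploit that the coefficient $rc^{pi}$ is a $p$-th power: because $r\in\mathbb{F}_{p}$ satisfies $r=r^{p}$, we have $\omega_0=(rc^{i})^{p}b^{p-i}$ with $(rc^{i})^{p}=N_{L/K}(rc^{i})$ and $rc^{i}\in K^{*}$ in the non-trivial case. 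Dividing, $N_{L/K}(x/(rc^{i}))=\omega_0/(rc^{i})^{p}=b^{p-i}$, and since $N_{L/K}(b)=b^{p}$ this yields $N_{L/K}(x/(rc^{i}b))=b^{-i}$. Hence $b^{i}$ lies in the group $N_{L/K}(L^{*})$.

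To conclude I would use the coprimality of $i$ and $p$: choosing integers $s,t$ with $si+tp=1$, we get $b=(b^{i})^{s}(b^{p})^{t}=(b^{i})^{s}N_{L/K}(b)^{t}\in N_{L/K}(L^{*})$, so $[\omega_0,b)_K$ splits by (\ref{item:sym5}), and therefore so does the original algebra.

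The main obstacle is precisely this last coprimality input: the computation only ever exhibits $b^{i}$, and not $b$, as a norm. When $p\mid i$ the argument breaks down, and in fact it must --- writing $i=pj$ one checks that $\omega_0=(rc^{pj}b^{1-j})^{p}$ is itself a $p$-th power, so by the Frobenius invariance recorded in \S\ref{sec:sym} the symbol equals $[rc^{pj}b^{1-j},b)_K$, a generic degree-$p$ algebra which need not split. Thus the statement is exactly the one needed in the range $1\le i\le p-1$ coming from the Witt-addition formula (\ref{eqn2:p2}), where $\gcd(i,p)=1$ holds automatically and the verification above goes through; everything apart from this coprimality point is routine.
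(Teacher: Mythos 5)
Your argument is correct precisely on the range where the lemma is true and used, and it takes a genuinely different route from the paper's. After the shared first step (using (\ref{item:sym3}) to pass to the degree-$p$ symbol $[\omega_0,b)_K$ with $\omega_0=rc^{pi}b^{p-i}$), the paper stays entirely inside the symbol calculus of \S\ref{sec:sym}: by (\ref{item:sym1}) and (\ref{item:sym2}) it rewrites $[\omega_0,b)\cong[(p-i)^{-1}\omega_0,\,b^{p-i})$, then uses (\ref{item:sym2}) and (\ref{item:sym4}) to trade the slot $b^{p-i}$ for $(p-i)r^{-1}c^{-pi}=\bigl((p-i)r^{-1}c^{-i}\bigr)^p$, a $p$-th power, so the symbol splits by the ``in particular'' clause after (\ref{item:sym5}). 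You instead verify (\ref{item:sym5}) head-on via the explicit norm computation $N_{L/K}(x)=x^p-x=\omega_0$, division by the $p$-th power $(rc^i)^p$, and a B\'ezout step upgrading $b^i\in N_{L/K}(L^*)$ to $b\in N_{L/K}(L^*)$. Both are sound; yours is more elementary and self-contained, the paper's is shorter. They are also closer than they appear: the paper's multiplication of the first slot by $(p-i)^{-1}$ is exactly the symbol-calculus avatar of your B\'ezout step, and it requires $p\nmid i$ for the same reason.

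Your closing observation is a genuine (and correct) catch, not a defect of your proof: as literally stated, ``for all $r,i\in\mathbb{Z}$,'' the lemma is false. For instance, take $p=2$, $K=\mathbb{F}_2((t))$, $i=p=2$, $r=c=1$, $b=t$: then $\omega_0=c^{p^2}=1$, so $[(0,1),t)_K=[1,t)_K$ in $Br(K)$, which is a division algebra by Lemma \ref{lem:division} (the extension $x^2-x=1$ is unramified and $\mathfrak{v}(t)=1$). The paper's own proof silently assumes $p\nmid i$ and $r,c\neq 0$ in $K$ --- the inverses $(p-i)^{-1}$, $r^{-1}$, $c^{-pi}$ exist only then --- with the degenerate cases $r\equiv 0$, $c=0$ splitting trivially, as you note. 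Since Lemma \ref{lem:add} invokes the statement only for $1\le i\le p-1$, nothing breaks downstream, but the hypothesis $\gcd(i,p)=1$ should really appear in the statement. One small caveat on your phrasing: for $p\mid i$ the symbol \emph{can} still split --- e.g.\ $i=0$ gives $[rb^p,b)\cong[rb,b)=0$ because $b=N_{L/K}(x/r)$ with $r^{p-1}=1$ --- so your ``need not split'' is the right claim, while ``it must'' should be read as applying to the breakdown of the argument rather than of the conclusion in every such instance.
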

\begin{proof}
By  (\ref{item:sym3}), $[(0,rc^{pi} b^{p-i}), b)_K  = [rc^{pi} b^{p-i}, b)_K $ in $Br(K)$. Now,
\begin{align*}
    [rc^{pi}b^{p-i},b) &\cong[(p-i)^{-1} rc^{pi}b^{p-i},b^{p-i}) \text{~~by~(\ref{item:sym1})~and~(\ref{item:sym2})~}\\
    &\cong [(p-i)^{-1} rc^{pi}b^{p-i},(p-i)r^{-1}c^{-pi})  \text{~~by~(\ref{item:sym2})~and~(\ref{item:sym4})}\\
    & = 0 \in Br(K) \text{~~by~(\ref{item:sym5})~}
    \end{align*}

\end{proof}
\begin{lemma}\label{lem:add}
If $\omega_1 \in K^p$, then $[(\omega_1,\omega_2),b)_{K} \cong [(\omega_1+b,\omega_2),b)_{K}$.
\end{lemma}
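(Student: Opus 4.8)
The plan is to pass from $(\omega_1,\omega_2)$ to $(\omega_1+b,\omega_2)$ by adding the Witt vector $(b,0)$ and then absorbing the resulting carry in the second slot into a sum of \emph{split} symbols via Lemma~\ref{lem:split}. First I would record the length-two addition law: for $(X_0,X_1),(Y_0,Y_1)\in W_2(K)$ one has
\begin{align*}
(X_0,X_1)+(Y_0,Y_1)=\Bigl(X_0+Y_0,\ X_1+Y_1-\sum_{i=1}^{p-1}\tfrac{(p-1)!}{i!(p-i)!}X_0^iY_0^{p-i}\Bigr),
\end{align*}
each coefficient $\tfrac{(p-1)!}{i!(p-i)!}=\tfrac1p\binom{p}{i}$ being an integer. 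Taking $(X_0,X_1)=(\omega_1,\omega_2)$ and $(Y_0,Y_1)=(b,0)$ and writing $s=\sum_{i=1}^{p-1}\tfrac{(p-1)!}{i!(p-i)!}\omega_1^i b^{p-i}$, this gives $(\omega_1,\omega_2)+(b,0)=(\omega_1+b,\ \omega_2-s)$. Since $[(b,0),b)_K=0$ by (\ref{item:sym4}), rule (\ref{item:sym1}) then yields the equality $[(\omega_1,\omega_2),b)_K=[(\omega_1+b,\omega_2-s),b)_K$ in $Br(K)$.

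It remains to delete the $-s$ from the second coordinate. Because Witt addition of vectors with vanishing first coordinate is coordinatewise in the second (all carry terms carry a factor of the zero first coordinate), one has $(\omega_1+b,\omega_2)=(\omega_1+b,\omega_2-s)+(0,s)$, so (\ref{item:sym1}) gives
\begin{align*}
[(\omega_1+b,\omega_2),b)_K=[(\omega_1+b,\omega_2-s),b)_K+[(0,s),b)_K .
\end{align*}
Here the hypothesis $\omega_1\in K^p$ enters: writing $\omega_1=c^p$, the $i$-th summand of $s$ becomes $\tfrac{(p-1)!}{i!(p-i)!}c^{pi}b^{p-i}$, which is exactly of the shape $rc^{pi}b^{p-i}$ treated in Lemma~\ref{lem:split}. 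Splitting $(0,s)=\sum_{i=1}^{p-1}\bigl(0,\tfrac{(p-1)!}{i!(p-i)!}c^{pi}b^{p-i}\bigr)$ (again coordinatewise) and applying (\ref{item:sym1}) together with Lemma~\ref{lem:split} shows $[(0,s),b)_K=0$. Combining the two displayed relations gives $[(\omega_1,\omega_2),b)_K=[(\omega_1+b,\omega_2),b)_K$ in $Br(K)$; as all algebras in sight have degree $p^2$, this Brauer equivalence upgrades to the asserted isomorphism.

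The only genuine subtlety, and thus the step I would carry out most carefully, is that Witt addition is not coordinatewise: adding $(b,0)$ necessarily introduces the carry $s$ in the second slot, and the entire content of the lemma is that $\omega_1\in K^p$ forces this carry to be a sum of symbols covered by Lemma~\ref{lem:split}. Everything else is routine bookkeeping with the rules of \S\ref{sec:sym}.
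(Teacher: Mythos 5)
Your proof is correct and follows essentially the same route as the paper: add $(b,0)$ via (\ref{item:sym1}) and (\ref{item:sym4}), then cancel the Witt-addition carry $s=\sum_{i=1}^{p-1}\tfrac{(p-1)!}{i!(p-i)!}c^{pi}b^{p-i}$ term by term using Lemma~\ref{lem:split}, exactly as the paper does in one combined computation. Your explicit verification that addition of vectors with vanishing first coordinate is coordinatewise, and the closing remark that Brauer equivalence in equal degree gives isomorphism, are details the paper leaves implicit but are correct.
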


\begin{proof}

Let $\omega_1 = c^p, c \in K$. By the addition rule in $W_2(K)$,
\begin{align*}
    (\omega_1,\omega_2)+(b,0)=(\omega_1+b,\omega_2-\sum_{i=1}^{p-1} \frac{(p-1)!}{i!(p-i)!} c^{pi} b^{p-i})
\end{align*}

By Lemma \ref{lem:split},  (\ref{item:sym1}) and (\ref{item:sym4}), $[(\omega_1,\omega_2),b)_{K} \cong [(\omega_1',\omega_2'),b)_{K}$ where 
\begin{align*}
    (\omega_1',\omega_2') &= (\omega_1,\omega_2) +(b,0) + \sum_{i=1}^{p-1}(0, \frac{(p-1)!}{i!(p-i)!} c^{pi} b^{p-i})\\
    &= (\omega_1 +b, \omega_2)
\end{align*}

\end{proof}

\begin{thm}
Let $A$ be  $p$-algebra of degree $p^2$ over $K$. If $A$ contains a totally ramified purely inseparable maximal subfield, then $A$ contains a totally ramified cyclic maximal subfield.
\end{thm}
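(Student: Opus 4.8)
The plan is to massage the presentation $A\cong[(\omega_1,\omega_2),b)$ (available from Remark \ref{rem:general}, with $(\mathfrak{v}(b),p)=1$ coming from Lemma \ref{lem:simple}) into the form $[\eta,b)$ for a Witt vector $\eta=(\eta_1,\eta_2)$ whose valuations satisfy the hypotheses of Lemma \ref{lem:eta}; once that is achieved, $K_\eta$ is a cyclic maximal subfield of $A$ that Lemma \ref{lem:eta} certifies to be totally ramified, and the theorem follows. The three preliminary lemmas are tailored for exactly this: Lemma \ref{lem:add} lets us push $b$ into the first coordinate so that $\mathfrak{v}(\eta_1)=\mathfrak{v}(b)\notin p\mathbb{Z}$, Lemma \ref{lem:split} is what makes that manipulation clean, and Lemma \ref{lem:eta} converts the resulting valuation data into total ramification.

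First I would arrange that the first coordinate lies in $K^p$, so that Lemma \ref{lem:add} is applicable. This is the step that accommodates an imperfect residue field: since we cannot extract a $p$-th root, I instead invoke the Frobenius invariance $[\omega,b)\cong[\omega^{p},b)$ from \S\ref{sec:sym}, which replaces $\omega=(\omega_1,\omega_2)$ by $(\omega_1^p,\omega_2^p)$ without changing the isomorphism class of $A$. After this reduction the first coordinate is a $p$-th power, so we may assume without loss of generality that $\omega_1\in K^p$.

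Next I would use the scaling relation $[\omega,b)\cong[\omega,\gamma^{p^2}b)$ from \S\ref{sec:sym} to replace $b$ by $\gamma^{p^2}b$ with $\mathfrak{v}(\gamma)$ sufficiently negative, so that $\mathfrak{v}(b)<\min(0,\mathfrak{v}(\omega_1),\mathfrak{v}(\omega_2))$; since the valuation changes only by multiples of $p^2$, the condition $(\mathfrak{v}(b),p)=1$ is preserved. Now Lemma \ref{lem:add} gives $A\cong[(\omega_1+b,\omega_2),b)$, so I set $\eta=(\omega_1+b,\omega_2)$. By the choice of $b$ we get $\mathfrak{v}(\eta_1)=\mathfrak{v}(\omega_1+b)=\mathfrak{v}(b)<0$, then $\mathfrak{v}(\eta_1)=\mathfrak{v}(b)\notin p\mathbb{Z}$, and finally $\mathfrak{v}(\eta_1)=\mathfrak{v}(b)<\mathfrak{v}(\omega_2)=\mathfrak{v}(\eta_2)$, which are precisely the three hypotheses of Lemma \ref{lem:eta}. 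Hence $K_\eta$ is totally ramified, and being a cyclic maximal subfield of $A\cong[\eta,b)$, it is the desired totally ramified cyclic maximal subfield.

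The main obstacle is the reduction to $\omega_1\in K^p$: over an imperfect residue field one cannot simply take a $p$-th root, and the role of the Frobenius twist (as opposed to a root extraction) is exactly to clear this coordinate while keeping the algebra in the same class, after which Lemma \ref{lem:add} becomes available. Once that is in place, the valuation bookkeeping in the scaling step and the verification against Lemma \ref{lem:eta} are routine. I would still check the degenerate cases $\omega_1=0$ or $\omega_2=0$, where the relevant valuation is $+\infty$, but these cause no trouble since Lemma \ref{lem:eta} only requires strict inequalities against $\mathfrak{v}(b)$.
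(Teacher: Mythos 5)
Your proposal is correct and follows essentially the same route as the paper: both reduce to $\omega_1,\omega_2\in K^p$ with $\mathfrak{v}(b)<\min\{0,\mathfrak{v}(\omega_1),\mathfrak{v}(\omega_2)\}$ via the symbol manipulations of \S\ref{sec:sym}, then apply Lemma \ref{lem:add} and conclude by checking the hypotheses of Lemma \ref{lem:eta} for $\eta=(\omega_1+b,\omega_2)$. Your explicit Frobenius twist $[\omega,b)\cong[\omega^p,b)$ and $p^2$-th-power scaling of $b$ merely spell out the reductions the paper invokes implicitly by citing \S\ref{sec:sym}.
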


\begin{proof}
By Remark \ref{rem:general}, $A\cong [(\omega_1, \omega_2), b)$ where $(v(b),p) =1$. Moreover,  we can assume that  $\omega_1,\omega_2 \in K^p$ and $v(b)<\min\{0, v(\omega_1), v(\omega_2)\}$ (\S\ref{sec:sym}). Now, by Lemma \ref{lem:add}, $A \cong [(\omega_1+b,\omega_2),b)_{K}$.
The Witt vector $\omega' = (\omega_1+b,\omega_2)$ satisfies the conditions of Lemma \ref{lem:eta}, and thus $K_{\omega'}$ is a totally ramified cyclic maximal subfield of $A$.
\end{proof}

\subsection{When $\dim_{\mathbb{F}_p}(k/\mathcal{P}(k)) \geq 2$}\label{sec:residuedim}

The above techniques do not extend easily to $p$-algebras of degree $p^m$ for $m\geq 3$, due to the complexity of the terms involved in the equations defining the cyclic extensions arising from Witt vector addition. Due to this reason, analyzing the ramification  of  these extensions become very cumbersome. We therefore turn to a different direction.  \\
\indent In this section, we will assume that the residue field $k$ satisfies $\dim_{\mathbb{F}_p}(k/\mathcal{P}(k)) \geq 2$ so that there are at least two  linearly disjoint cyclic field extensions of degree $p^m$ over $k$ for every $m>0$.   We will use  the following lemma.

\begin{lemma}[{\cite[Theorem 4.7]{CFM}}]\label{Clem}
Suppose $F$ any field with $char(F)=p>0$. If $A$ and $B$ are two $p$-algebras over $F$ sharing a simple purely inseparable maximal subfield, then they share a cyclic maximal subfield.
\end{lemma}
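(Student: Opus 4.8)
The plan is to translate the hypothesis into a statement about symbols and then isolate the real content as a ``common slot'' problem for length-$m$ Witt symbols that share their multiplicative slot.

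First I would record that the hypothesis already forces a great deal of structure. Let $P$ denote the shared simple purely inseparable maximal subfield; since $P$ is a maximal subfield of each algebra we have $\deg A = [P:F] = \deg B =: p^m$, and by the cyclicity theorem (\cite[Chapter IV, Theorem 4.4.10]{jacobson_book}) both $A$ and $B$ are cyclic. As in Lemma \ref{lem:simple}, $P = F(y)$ is simple with $y^{p^m} = b \in F^*$, so Albert's structure theorem (\cite[Chapter VII, Theorem 26]{albert_book}, cf. Remark \ref{rem:general}) lets me write $A \cong [\omega, b)_F$ and $B \cong [\eta, b)_F$ with the \emph{same} second slot $b$ and some $\omega, \eta \in W_m(F)$. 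Thus the theorem reduces to the assertion that two Witt symbols over $F$ with a common multiplicative slot must share a cyclic maximal subfield.

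Next I would reformulate ``shared cyclic maximal subfield'' intrinsically. A cyclic extension $F_\zeta$ of degree $p^m$ is a maximal subfield of $A$ exactly when $A \cong [\zeta, c)_F$ for some $c \in F^*$; equivalently, since $F_\zeta/F$ is cyclic of degree $\deg A$, exactly when $F_\zeta$ splits $A$, and likewise for $B$. By the splitting criterion (\ref{item:sym5}) applied after base change to $F_\zeta$, this amounts to $b$ being a norm both from $F_\omega F_\zeta$ over $F_\zeta$ and from $F_\eta F_\zeta$ over $F_\zeta$. So the whole theorem becomes: there is a single cyclic $\zeta \in W_m(F)$, of full degree $p^m$, for which $b$ is simultaneously a norm from both compositum extensions. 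The naive choices $\zeta = \omega$ and $\zeta = \eta$ fail in general, so a genuine construction is needed, and this is the heart of the matter.

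To build $\zeta$ I would argue by induction on the Witt length $m$. In the base case $m=1$ the shared inseparable field satisfies $P^p = F(y^p) = F$, so every $u \in P$ has $u^p \in F$; this lets me run the generator manipulation $z = x + u$ of the degree-$p$ theorem above (in the spirit of \cite[Lemma 3.1]{Chapman:2015}) to write down an explicit family of cyclic maximal subfields of $[\omega, b)_F$ indexed by $u \in P$, together with the same family for $[\eta, b)_F$; the task is then to verify that these two families, which share the datum $b$, overlap. For $m \ge 2$ a single element $z = x_1 + u$ no longer generates a degree-$p^m$ field, so instead I would split off the top Artin--Schreier layer common to both symbols, descend $\omega$ and $\eta$ to length-$(m-1)$ Witt vectors over the resulting degree-$p$ field, solve the smaller common-slot problem there by induction, and lift the solution back up the Witt tower.

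The \textbf{main obstacle} is precisely this common-slot lemma, both in the base case and in its propagation up the tower. Unlike quaternion or biquaternion linkage, the two slots play asymmetric roles here --- the additive Witt-vector slot against the multiplicative slot --- so there is no symmetry to exploit and the norm condition has to be engineered on one side only. Worse, for $m \ge 2$ the higher Witt components of $\omega$ and $\eta$ interact through the nonlinear Witt addition law (these are the ``cumbersome terms'' flagged at the start of \S\ref{sec:residuedim}), so lifting a solution up the tower while keeping $[F_\zeta : F]$ equal to $p^m$ --- that is, guaranteeing that $\zeta$ yields a genuine \emph{maximal} subfield rather than a cyclic splitting field of smaller degree --- is where I expect the real difficulty to lie. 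I would anticipate needing either a careful Kato--Milne symbol calculus for length-$m$ symbols sharing a slot, or an index-reduction argument, to carry the induction through in full generality.
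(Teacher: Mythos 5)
First, note that the paper offers no proof of this lemma at all: it is imported verbatim from \cite[Theorem 4.7]{CFM}, so your attempt has to be judged on its own merits rather than against an in-paper argument. Your opening reduction is correct and matches the framework the paper already sets up in Remark \ref{rem:general}: since the shared subfield is simple purely inseparable, $P=F(y)$ with $y^{p^m}=b$, so Albert's theorem gives $A \cong [\omega,b)_F$ and $B \cong [\eta,b)_F$ with a common second slot, and your splitting-field reformulation of ``$F_\zeta$ is a maximal subfield'' is also fine (modulo the degenerate case $F_\zeta \cap F_\omega \neq F$, where ``norm from the compositum'' must be read as norm from the cyclic extension $(F_\zeta)_\omega$, which may have smaller degree). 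But this normalization is precisely where the content of the lemma \emph{begins}, and everything you offer after it is a plan rather than an argument.

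Concretely, two steps are missing or would fail as stated. In the base case $m=1$, the generator trick $z=x+uy$ (valid, by the same computation as \cite[lemma 3.1]{Chapman:2015}, since $(uy)^p=u^pb$) produces only the family $\{F_{\omega+u^pb}: u\in F\}$ of cyclic maximal subfields of $[\omega,b)$, and likewise $\{F_{\eta+v^pb}: v \in F\}$ for $[\eta,b)$; these families overlap only if $s(\omega+u^pb)\equiv \eta+v^pb \bmod \mathcal{P}(F)$ for some $s\in\mathbb{F}_p^*$, a condition that does not follow from the hypotheses, and since the families do not exhaust the cyclic maximal subfields, ``verify that the two families overlap'' is not even the right closing move --- you assert it as the task and never carry it out. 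In the inductive step, ``split off the top Artin--Schreier layer common to both symbols'' is undefined: $\omega_1$ and $\eta_1$ need not agree modulo $\mathcal{P}(F)$, so no common layer exists until a common-subfield problem of the same kind has already been solved; and even granting a common degree-$p$ cyclic subfield $F_{\zeta_1}$, the inductive output is an extension cyclic of degree $p^{m-1}$ \emph{over $F_{\zeta_1}$}, and promoting it to a degree-$p^m$ extension cyclic \emph{over $F$} is an embedding problem with a genuine obstruction, which you name (``lift the solution back up the Witt tower'') but do not address. Your own closing paragraph concedes that the common-slot lemma and this lifting are open in your approach; so what you have is the correct reduction plus an accurate diagnosis of the difficulty --- the difficulty the paper sidesteps by citing \cite[Theorem 4.7]{CFM} --- but not a proof.
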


\begin{thm}\label{main}
    Let $A$ be a $p$-algebra over $K$ with  totally ramified purely inseparable maximal subfield. Suppose the residue field $k$ satisfies $\dim_{\mathbb{F}_p}(k/\mathcal{P}(k)) \geq 2$,   then $A$ contains a totally ramified cyclic maximal subfield.
\end{thm}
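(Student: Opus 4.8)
The plan is to exploit Lemma~\ref{Clem} together with the abundance of cyclic extensions of the residue field $k$ guaranteed by the hypothesis $\dim_{\mathbb{F}_p}(k/\mathcal{P}(k))\geq 2$. By Remark~\ref{rem:general}, the given totally ramified purely inseparable maximal subfield makes $A$ cyclic, with $A\cong[\omega,b)_K$ for some $\omega\in W_m(K)$ and $b\in K^*$ satisfying $(\mathfrak{v}(b),p)=1$. The idea is to manufacture an \emph{auxiliary} $p$-algebra $B$ over $K$ that (i) is manifestly a division algebra containing a totally ramified cyclic maximal subfield, and (ii) shares a simple purely inseparable maximal subfield with $A$. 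Then Lemma~\ref{Clem} produces a cyclic maximal subfield shared by $A$ and $B$, and the totally ramified structure of $B$ should force that shared subfield to be totally ramified inside $A$ as well.

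Concretely, first I would use Lemma~\ref{lem:disjoint} at the residue level: since $\dim_{\mathbb{F}_p}(k/\mathcal{P}(k))\geq 2$, there exists a cyclic extension $k_{\bar\eta}/k$ of degree $p^m$ that is linearly disjoint from the residue data attached to $\omega$. Lifting $\bar\eta$ to a Witt vector $\eta\in W_m(K)$ whose entries are units (so that $K_\eta/K$ is \emph{unramified} with residue $k_{\bar\eta}$), I would set $B:=[\eta,b)_K$ with the \emph{same} $b$ as for $A$. Because $(\mathfrak{v}(b),p)=1$ and $K_\eta/K$ is unramified, Lemma~\ref{lem:division} guarantees that $B$ is a division algebra. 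Moreover $B\cong[\eta,b)_K$ contains the totally ramified purely inseparable field $K(\sqrt[p^m]{b})$ (as in the argument of Section~\ref{Seasy}, via the element $z=uy$), so $A$ and $B$ share the simple purely inseparable maximal subfield $K(\sqrt[p^m]{b})$. At the same time, $B$ is semiramified in the sense of Remark~\ref{rem:semiramified}: it contains the unramified maximal subfield $K_\eta$ and the totally ramified $K(\sqrt[p^m]{b})$, so the construction in Section~\ref{Seasy} (now applied to $B$) yields a totally ramified cyclic maximal subfield of $B$.

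Having arranged a shared simple purely inseparable maximal subfield, Lemma~\ref{Clem} gives a common cyclic maximal subfield $M$ of $A$ and $B$. The final—and I expect hardest—step is to argue that $M$ can be taken \emph{totally ramified}. The natural mechanism is the fundamental equality~(\ref{eqn:fund}) applied to $B$: since $B$ is semiramified with $\overline{B}=k_{\bar\eta}$ chosen linearly disjoint from the residue of $A$, any cyclic maximal subfield of $B$ must reconcile its residue with $k_{\bar\eta}$, and the disjointness should obstruct $M$ from acquiring the same unramified residue behaviour inside $A$, forcing $\mathfrak{v}$ on $M$ to have index $p^m$ over $\Gamma_K$. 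The delicate point is that Lemma~\ref{Clem} is not a priori ramification-aware, so I would either track the valuation through the explicit construction in~\cite{CFM} or, more cleanly, show that the residue disjointness of $B$ relative to $A$ leaves total ramification as the only consistent option for the shared cyclic subfield. This reduction of the ramification bookkeeping to the linear disjointness supplied by Lemma~\ref{lem:disjoint} is the crux of the argument.
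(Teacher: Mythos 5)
Your setup is on the right track---Remark \ref{rem:general}, an auxiliary semiramified division algebra $[\eta,b)_K$ built from an unramified lift with the \emph{same} $b$, Lemma \ref{lem:division}, Remark \ref{rem:semiramified}, and Lemma \ref{Clem} are exactly the ingredients the paper uses---but the final step, which you correctly flag as the crux, has a genuine gap that cannot be closed in the form you propose. With a single auxiliary algebra $B=[\eta,b)_K$, all you know about the common cyclic maximal subfield $M$ is that $\overline{M}\subseteq\overline{B}=k_{\bar\eta}$; nothing forces $\overline{M}=k$. Concretely, $M$ could be the unramified field $K_\eta$ itself whenever $K_\eta$ happens to embed in $A$, and no choice of $\bar\eta$ rules this out: ``linearly disjoint from the residue data attached to $\omega$'' is not a well-defined condition, because the presentation $A\cong[\omega,b)_K$ is far from unique and the unramified subfields of $A$ are not controlled by the residues of any one Witt vector (for instance, $A$ may be split or semiramified, in which case it contains unramified cyclic maximal subfields no matter which $\bar\eta$ you picked). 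So the ``residue disjointness of $B$ relative to $A$'' that your argument appeals to has nothing to bite on. (A minor side remark: you do not need the element $z=uy$ from \S\ref{Seasy} to see that $B$ contains $K(\sqrt[p^m]{b})$; the symbol generator $y$ of $[\eta,b)_K$ already satisfies $y^{p^m}=b$.)

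The paper's fix is to use the hypothesis $\dim_{\mathbb{F}_p}(k/\mathcal{P}(k))\geq 2$ to build \emph{two} auxiliary algebras rather than one. By Lemma \ref{lem:disjoint} there are cyclic extensions $k_1/k$ and $k_2/k$ of degree $p^m$ with $k_1\cap k_2=k$; taking inertial lifts $K_{\omega_1},K_{\omega_2}$ and setting $D_i=[\omega_i,b)_K$, both are division algebras (Lemma \ref{lem:division}) and semiramified with $\overline{D_i}=k_i$ (Remark \ref{rem:semiramified}). Since $A$, $D_1$ and $D_2$ all contain $K(\sqrt[p^m]{b})$, Lemma \ref{Clem} yields a cyclic maximal subfield $L$ common to all three, and then $\overline{L}\subseteq\overline{D_1}\cap\overline{D_2}=k_1\cap k_2=k$ forces $L/K$ to be totally ramified. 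The point is that the ramification bookkeeping is carried entirely by the \emph{pair} of auxiliary algebras, so no information about the subfield structure of $A$ is ever needed---precisely the information your single-auxiliary version lacks. Note also that your construction uses only one cyclic extension of $k$, i.e.\ only $\dim_{\mathbb{F}_p}(k/\mathcal{P}(k))\geq 1$; the full strength of the hypothesis enters exactly at the two-disjoint-extensions step, which is a structural sign that a one-auxiliary argument could not have sufficed.
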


\begin{proof}
   Let the degree of $A$ be $p^m$. By Lemma \ref{lem:simple},  $A$ contains a  totally ramified purely inseparable extension of degree $p^m$ of the form  $K(\sqrt[p^m]{b})$, for some $b\in K^*$ with $(\mathfrak{v}(b), p) = 1$.  
  Since $\dim_{\mathbb{F}_p}(k/\mathcal{P}(k)) \geq 2$ there exists two  cyclic field extensions  $k_1/k$ and $k_2/k$ of degree $p^m$ with $k_1\cap k_2 = k$ (Lemma \ref{lem:disjoint}). Let $K_1, K_2$ be the respective inertial lifts over $K$ corresponding to $\omega_1, \omega_2 \in W_m(K)$. Then by Lemma \ref{lem:division}, $D_i = [\omega_i, b)_K$ are   division algebras over $K$ for $i=1,2$. Moreover, $\overline{D_i} = k_i, i=1,2$ (Remark \ref{rem:semiramified}). Now $A$, $D_1$ and $D_2$ share  the same purely inseparable subfield $K(\sqrt[p^m]{b})$ and therefore by Lemma \ref{Clem}, share a cyclic maximal subfield $L$. Now 
  \begin{align*}
  \overline{L} \subseteq \overline{D_1} \cap \overline{D_2} = k_1 \cap k_2 = k
\end{align*}  
 Hence $L/K$ is totally ramified as required.
    \end{proof}


\section{Existence of totally ramified separable maximal subfields}

Suppose a $p$-algebra $A$ contains  a totally ramified maximal subfield $L$, then $L/K$ is a tower of extensions $L_1/K$ and $L_2/L_1$, the former separable and the latter purely inseparable (\cite[\href{https://stacks.math.columbia.edu/tag/030K}{Tag 030K}]{stacks-project}). Let $B:=C_A(L_1)$ be the centralizer of $L_1$ in $A$. Then by the Double Centralizer Theorem (\cite[\S12.7 Theorem (ii)]{pierce_assoc}), $B$ contains the totally ramified purely inseparable maximal subfield $L_2$.  Assume that Conjecture \ref{conj1} is true. Then $B$ contains a  totally ramified cyclic maximal subfield $E$. Now $E/L_1/K$ is tower of  totally ramified separable extensions in $A$ and $E$ is  maximal subfield of $A$. We summarize this below:
\begin{thm}
   Assume that Conjecture \ref{conj1} is true. Then any $p$-algebra with a totally ramified maximal subfield contains a totally ramified separable maximal subfield. 
\end{thm}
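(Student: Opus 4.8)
The plan is to produce the required subfield as a two-step tower, combining the separable/purely-inseparable factorization of the given totally ramified maximal subfield with a single application of Conjecture \ref{conj1} to a centralizer. Let $A$ be a $p$-algebra over $K$ of degree $p^m$ and let $L$ be a totally ramified maximal subfield, so $[L:K]=p^m$.

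First I would factor $L/K$ through its maximal separable subextension, writing $K \subseteq L_1 \subseteq L$ with $L_1/K$ separable and $L/L_1$ purely inseparable (Stacks Tag 030K). The first substantive step is to verify that \emph{both} layers are totally ramified, not merely that $L_1/K$ is separable. Since $K$ is complete the valuation extends uniquely, the fundamental identity $[L:K]=e(L/K)\,f(L/K)$ holds, and $e,f$ are multiplicative in towers; as $f(L/K)=1$ this forces $f(L_1/K)=f(L/L_1)=1$. Hence $L_1/K$ is separable and totally ramified, $L/L_1$ is purely inseparable and totally ramified, and $L_1$ is again a complete discrete valued field of characteristic $p$ with residue field $k$, so Conjecture \ref{conj1} is available over $L_1$.

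Next I would pass to the centralizer $B:=C_A(L_1)$. Because $L_1$ is a (simple) subfield of the central simple $K$-algebra $A$, the Double Centralizer Theorem (\cite[\S12.7]{pierce_assoc}) gives that $B$ is central simple over $L_1$ with $\dim_K L_1 \cdot \dim_K B = \dim_K A$. Writing $[L_1:K]=p^a$, a degree count yields $\deg_{L_1} B = p^{m-a}$, so $B$ is a $p$-algebra over $L_1$. Since $L \supseteq L_1$ is commutative we have $L \subseteq B$, and $[L:L_1]=p^{m-a}=\deg_{L_1} B$ shows $L$ is a maximal subfield of $B$; it is totally ramified and purely inseparable over $L_1$ by the previous paragraph. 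Applying Conjecture \ref{conj1} to $B$ over $L_1$ then produces a totally ramified cyclic, hence separable, maximal subfield $E/L_1$.

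Finally I would assemble the tower $K \subseteq L_1 \subseteq E$. Then $E/K$ is separable as a composite of separable extensions and totally ramified since $e$ is multiplicative and $f=1$ at every stage, while the bookkeeping $[E:K]=[E:L_1]\,[L_1:K]=p^{m-a}\cdot p^a=p^m=\deg_K A$ shows $E$ is a maximal subfield of $A$. Thus $E$ is the desired totally ramified separable maximal subfield. I expect the only genuine care to lie in the ramification-and-degree bookkeeping across the two towers and the centralizer (ensuring total ramification of each layer, maximality of $L$ in $B$, and maximality of $E$ in $A$); the remaining ingredients are direct invocations of the Double Centralizer Theorem and of Conjecture \ref{conj1}.
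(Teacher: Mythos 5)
Your proposal is correct and follows essentially the same route as the paper: factor $L/K$ through its maximal separable subextension $L_1$, pass to the centralizer $B=C_A(L_1)$ via the Double Centralizer Theorem, apply Conjecture \ref{conj1} to $B$ over $L_1$, and assemble the tower $E/L_1/K$. The only difference is that you spell out the ramification and degree bookkeeping (multiplicativity of $e$ and $f$, maximality of $L$ in $B$ and of $E$ in $A$) that the paper leaves implicit, which is a sound addition rather than a deviation.
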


We finish by asking the following question.
\begin{question}\label{quest}
Let $A$ be  $p$-algebra over $K$. Assume that the residue field $k$ is not perfect. If $A$ contains purely inseparable maximal subfield of ramification index $r$  then  does it  contain cyclic maximal subfield of ramification index $r$?
\end{question}

\section*{Acknowledgements}
The authors wish to thank David Saltman and Jean-Pierre Tignol for the illuminating discussions on this topic. The second author acknowledges the support of the DAE, Government of India, under Project Identification No. RTI4001.

\bibliographystyle{abbrv}
\bibliography{bibfile}

\end{document}